\newcommand*{\myproofname}{Proof}
\title{On DP-coloring of graphs and multigraphs}
\date{}
\author{
Anton~Bernshteyn\thanks{Department of Mathematics, University of Illinois at Urbana--Champaign, IL, USA, \texttt{bernsht2@illinois.edu}. Research of this author is supported by the Illinois Distinguished Fellowship.}
\and Alexandr Kostochka\thanks{Department of Mathematics, University of Illinois at Urbana--Champaign, IL, USA and
Sobolev Institute of Mathematics, Novosibirsk 630090, Russia, \texttt{kostochk@math.uiuc.edu}. Research of this author is supported in part by NSF grant
 DMS-1266016 and by grant 15-01-05867 of the Russian Foundation for Basic Research.}
 \and Sergei Pron\thanks{Altai State University, Barnaul, Russia, \texttt{pspron@mail.ru}.}
 }
\newcommand{\neutralize}[1]{\expandafter\let\csname c@#1\endcsname\count@}
\newtheorem{theo}{Theorem}
\newtheorem{lemma}[theo]{Lemma}
\newtheorem{corl}[theo]{Corollary}
\theoremstyle{definition}
\newtheorem{defn}[theo]{Definition}
\theoremstyle{remark}
	\def\quotient#1#2{%
		\raise1ex\hbox{$#1$}\Big/\lower1ex\hbox{$#2$}%
	}
	\renewcommand{\epsilon}{\varepsilon}
	\newcommand{\charge}[0]{\operatorname{ch}}
\begin{document}
	\maketitle
	
\begin{abstract}
While solving a question on list coloring of planar graphs, Dvo\v r\' ak and Postle introduced
the new notion of DP-coloring (they called it {\em correspondence coloring}). A~DP-coloring
of a graph $G$ reduces  the problem of finding a coloring of $G$ from a given 
 list $L$ to  the problem of finding a ``large'' independent set in an auxiliary graph $H(G,L)$ with
 vertex set $\{(v,c)\,: \, v\in V(G) \text{ and } {c\in L(v)} \}$. It is similar to the old reduction by Plesnevi\v{c} and Vizing
of the $k$-coloring problem to the problem of finding an independent
set of size $|V(G)|$ in the Cartesian product $G\square K_k$. Some properties of the DP-chromatic number $\chi_{DP}(G)$ resemble
the properties of the list chromatic number $\chi_{\ell}(G)$ but some  differ quite a lot.
It is always the case that  $\chi_{DP}(G)\geq \chi_{\ell}(G)$. The goal of this note is to introduce 
DP-colorings for multigraphs and to prove for them an analog of the result of Borodin and 
Erd\H{o}s--Rubin--Taylor characterizing the multigraphs that do not admit DP-colorings from
some DP-degree-lists. This characterization yields an analog of Gallai's Theorem on the minimum
number of edges in $n$-vertex graphs critical with respect to DP-coloring.
\\
\\
 {\small{\em Mathematics Subject Classification}: 05C15, 05C35}\\
 {\small{\em Key words and phrases}:  vertex degrees, list coloring, critical graphs.}
\end{abstract}

	\section{Introduction}	
 Graphs in this note  are assumed to be simple, i.e., they cannot have parallel edges or loops; multigraphs may have multiple edges but not loops.
 The complete $n$-vertex graph is denoted by $K_n$, and the $n$-vertex cycle is denoted by $C_n$.
  If $G$ is a (multi)graph and $v$,~$u \in V(G)$, then $E_G(v, u)$ denotes the set of all edges in $G$ connecting  $v$ and $u$,
 $e_G(v, u) \coloneqq |E_G(v, u)|$, and $\deg_G(v)\coloneqq\sum_{u\in V(G)\setminus \{v\}} e_G(v,u)$. For $A \subseteq V(G)$, $G[A]$ denotes the sub(multi)graph of $G$ induced by $A$, and for $A$, $B \subseteq V(G)$, $G[A,B]$ denotes the maximal bipartite sub(multi)graph of $G$ with parts $A$ and $B$. If $G_1$, \ldots, $G_k$ are graphs, then $G_1 + \ldots + G_k$ denotes the graph with vertex set $V(G_1)\cup \ldots \cup V(G_k)$ and edge set $E(G_1) \cup \ldots \cup E(G_k)$. The independence number of $G$ is denoted by $\alpha(G)$. For $k \in \mathbb{Z}_{> 0}$, $[k]$ denotes the set $\{1, \ldots, k\}$.

Recall that a (proper) $k$-{\em coloring} of a graph $G$ is a mapping $f\,:\,V(G)\to [k]$ such that $f(v)\neq f(u)$ whenever $vu\in E(G)$.
The smallest $k$ such that $G$ has a $k$-coloring is called the {\em chromatic number of} $G$ and is denoted by $\chi(G)$.
Plesnevi\v{c} and Vizing~\cite{PV} proved that $G$ has a $k$-coloring if and only if the Cartesian product $G\square K_k$ contains an independent set of size $|V(G)|$, i.e., $\alpha(G\square K_k)=|V(G)|$.

In order to tackle some graph coloring problems, Vizing~\cite{Vi} and independently   Erd\H{o}s,  Rubin, and  Taylor~\cite{ERT}
introduced the more general notion of {\em list coloring}. A {\em list} $L$ for a graph $G$ is a map $L : V(G) \to \operatorname{Pow}(\mathbb{Z}_{>0})$ that assigns to each vertex $v \in V(G)$ a set $L(v) \subseteq \mathbb{Z}_{> 0}$. An \emph{$L$-coloring} of $G$ is a mapping $f\,:\,V(G)\to \mathbb{Z}_{>0}$ such that 
$f(v)\in L(v)$ for each $v\in V(G)$ and  $f(v)\neq f(u)$ whenever $vu\in E(G)$. The {\em list chromatic number}, $\chi_{\ell}(G)$,
is the minimum $k$ such that $G$ has an $L$-coloring for each $L$ satisfying $|L(v)|=k$ for every $v\in V(G)$.

Since $G$ is $k$-colorable if and only if it is $L$-colorable with the list $L:v \mapsto [k]$, we have $\chi_{\ell}(G)\geq \chi(G)$ for every $G$; however, the difference $\chi_{\ell}(G)- \chi(G)$ can be arbitrarily large.
Moreover, graphs with chromatic number $2$ may have arbitrarily high list chromatic number. While $2$-colorable graphs may
have arbitrarily high minimum degree,  Alon~\cite{Al2000} showed that $\chi_{\ell}(G)\geq (1/2-o(1))\log_2 \delta$ for each graph $G$ with 
minimum degree $\delta$. On the other hand, some well-known upper bounds on $\chi(G)$ in terms of vertex degrees hold for
$\chi_{\ell}(G)$ as well. For example,  Brooks' theorem and the degeneracy upper bound hold for $\chi_{\ell}(G)$. 
Furthermore,  Borodin~\cite{B1,B2} and independently   Erd\H{o}s,  Rubin, and  Taylor~\cite{ERT} generalized Brooks' Theorem
to degree lists. Recall that a list $L$ for a graph $G$ is a {\em degree list} if $|L(v)|=\deg_G(v)$ for every $v\in V(G)$.

\begin{theo}[\cite{B1,B2,ERT}; a simple proof in~\cite{KSW}]\label{Bor}
	Suppose that $G$ is a connected graph. Then $G$ is not $L$-colorable for some degree list $L$ if and only if each block of $G$ is either a complete graph or an odd cycle.
\end{theo}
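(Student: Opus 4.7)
Call a connected graph a \emph{Gallai tree} if each of its blocks is a complete graph or an odd cycle; the theorem asserts that a connected graph $G$ fails to admit an $L$-coloring for some degree-list $L$ if and only if $G$ is a Gallai tree. The proof splits naturally into two directions.

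\emph{Sufficiency.} I would induct on the number of blocks of $G$. The two single-block base cases are: for $K_n$, the constant list $L(v) = \{1,\ldots,n-1\}$ is bad since any $L$-coloring would require $n$ distinct values; for $C_{2k+1}$, the classical cyclic rotation of the three lists $\{1,2\}$, $\{2,3\}$, $\{1,3\}$ around the cycle is bad by a parity argument. For the inductive step, pick an end-block $B$ of $G$ attached at a cut-vertex $v$, apply the hypothesis to the smaller Gallai tree $G - (V(B)\setminus\{v\})$ to obtain a bad degree-list $L'$, and extend $L'$ over $V(B)\setminus\{v\}$ so that for every value $c \in L'(v)$ the block $B$, with $v$ pre-assigned color $c$, is not list-colorable from the restricted lists. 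Since $B$ is either $K_n$ or $C_{2k+1}$, an appropriate relabeling of the base-case constructions accomplishes this, and gluing with $L'$ yields a bad degree-list for $G$.

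\emph{Necessity.} I would prove the contrapositive: if some block $B$ of $G$ is 2-connected but neither complete nor an odd cycle, then every degree-list $L$ on $G$ admits an $L$-coloring. The plan is to isolate inside $B$ a small induced ``good'' subgraph $H$ on which degree-choosability can be directly verified, and then extend to the rest of $G$ by greedy degeneracy. Concretely, the Erd\H{o}s--Rubin--Taylor structural lemma furnishes $H$ of one of a few specific shapes --- e.g.\ an even cycle with exactly one chord, two odd cycles meeting at a single vertex, or a suitable theta subgraph. Order $V(G)$ as $v_1,\ldots,v_n$ with $V(H)$ as an initial segment so that every later vertex has at least one earlier neighbor; such an ordering exists because $G$ is connected and $H\subseteq B$ lies in a single block. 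Color in the reverse order, stopping before $V(H)$: when $v_i$ is processed, at least one of its neighbors (an earlier one) is still uncolored, so at most $\deg_G(v_i) - 1 < |L(v_i)|$ of its list colors have been used and a valid choice from $L(v_i)$ remains. After this reduction, each $v \in V(H)$ has at most $\deg_G(v) - \deg_H(v)$ list colors forbidden by its external neighbors, so the restricted lists on $H$ form a degree-list on $H$, which can be completed by the special structure of $H$.

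The main obstacle is the final step: directly verifying, for each of the possible shapes of $H$ singled out by the structural lemma, that $H$ is degree-choosable. This is a short but nontrivial case analysis and is the reason the structural lemma must identify very specific small graphs, each of which can be handled by an ad hoc argument without invoking the theorem itself.
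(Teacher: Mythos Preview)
The paper does not contain its own proof of this theorem: it is quoted as a known result, attributed to Borodin and to Erd\H{o}s--Rubin--Taylor, with a pointer to the short proof in~\cite{KSW}. So there is no in-paper proof to compare your proposal against.

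That said, your outline is the classical ERT/KSW route and is essentially sound. Two small remarks. For the sufficiency base case on odd cycles you do not need any rotation of three $2$-element lists: the constant list $L(v)=\{1,2\}$ already fails since $C_{2k+1}$ is not $2$-colorable. In the inductive step, the clean way to glue is to use \emph{fresh} colors for the leaf block $B$: enlarge $L'(v)$ at the cut vertex by a new set $S$ of size $\deg_B(v)$ and give every other vertex of $B$ lists drawn from $S$ reproducing the bad base-case configuration. Then any $L$-coloring either assigns $v$ a color from $L'(v)$ (contradicting the inductive hypothesis on $G'$) or a color from $S$ (contradicting the base case on $B$). Your phrase ``for every value $c\in L'(v)$ the block $B$ \ldots\ is not list-colorable'' is stronger than what is needed and not what your construction would actually deliver.

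It may also interest you that the paper's proof of the DP-analog (Theorem~\ref{theo:main}) does \emph{not} follow the ERT structural-lemma strategy you describe. Instead of locating an even cycle with a chord or a theta subgraph, the paper argues via regularity (Lemma~\ref{lemma:regular}) and a short sequence of reductions (Lemmas~\ref{lemma:triangle}--\ref{lemma:2connected}) that exploit the block structure and a ``two non-adjacent vertices with a common neighbor'' trick. That line of argument also specializes to give a proof of Theorem~\ref{Bor} for simple graphs, different in flavor from the one you sketch.
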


This result allows to extend Gallai's bound~\cite{G1} on the minimum number of edges in $n$-vertex $k$-critical graphs (i.e.,
graphs $G$ with $\chi(G)=k$ such that after deletion of any edge or vertex the chromatic number decreases) to $n$-vertex 
list-$k$-critical graphs (i.e.,
graphs $G$ with $\chi_{\ell}(G)=k$ such that after deletion of any edge or vertex the list chromatic number decreases).
	
List coloring proved useful in establishing a number of results for ordinary graph coloring; however, generally it is often much harder to prove
upper bounds on the list chromatic number than on chromatic number.  In order to prove such an upper bound for a class of planar 
 graphs, Dvo\v r\' ak and Postle~\cite{DP} introduced and heavily used a new generalization of list coloring;
they called it {\em correspondence coloring}, and we will call it {\em DP-coloring}, for short.

First, we show how to reduce to DP-coloring the problem of $L$-coloring of a graph~$G$. Given a list $L$ for $G$,
the vertex set of the auxiliary graph $H=H(G,L)$ is $\{(v,c)\,: \, v\in V(G) \text{ and } {c\in L(v)} \}$, and two distinct vertices $(v,c)$ 
and $(v',c')$ are adjacent in $H$ if and only if either $c=c'$ and $vv'\in E(G)$, or $v= v'$. Note that the independence number of $H$ is at most $|V(G)|$,
 since $V(H)$ is covered by $|V(G)|$ cliques. If $H$ 
has an independent set $I$ with $|I|=|V(G)|$, then, for each $v\in V(G)$, there is a unique $c\in L(v)$ such that $(v,c)\in I$. 
Moreover, the same color $c$ is not chosen for any two adjacent vertices. In other words, the map $f : V(G) \to \mathbb{Z}_{> 0}$ defined by $(v, f(v)) \in I$ is an $L$-coloring of $G$.
On the other hand, if $G$ has an $L$-coloring $f$, then the set $\{(v,f(v))\,:\,v\in V(G)\}$ is an independent set of size $|V(G)|$ in $H$.

By construction, for every distinct $v$, $v'\in V(G)$, the set of edges of $H$ connecting $\{(v,c)\,: \,  {c\in L(v)} \}$ and $\{(v',c')\,: \,  {c'\in L(v')} \}$
is empty if $vv'\notin E(G)$ and forms a matching (possibly empty) if $vv'\in E(G)$. Based on these properties of $H(G,L)$, 
Dvo\v r\' ak and Postle~\cite{DP} introduced the DP-coloring. The phrasing below is slightly different, but
the essence and the spirit are theirs. 	
	
	\begin{defn}\label{cov}
		Let $G$ be a graph. A \emph{cover} of $G$ is a pair $(L, H)$, where $L$ is an assignment of pairwise disjoint sets to the vertices of $G$ and $H$ is a graph with vertex set $\bigcup_{v \in V(G)} L(v)$, satisfying the following  conditions.
		\begin{enumerate}
			\item For each $v\in V(G)$, $H[L(v)]$ is a complete graph.
			\item For each $uv \in E(G)$, the edges between $L(u)$ and $L(v)$ form a matching (possibly empty).
			\item For each distinct $u$, $v\in V(G)$ with $uv \not\in E(G)$, no edges of $H$ connect   $L(u)$ and $L(v)$.
		\end{enumerate}
	\end{defn}
	
	\begin{defn}\label{defn:coloring}
Suppose $G$ is a graph and $(L, H)$ is a cover of $G$. An \emph{$(L,H)$-coloring} of $G$ is an independent set $I \subseteq V(H)$ of size $|V(G)|$. In this context, we refer to the vertices of $H$ as the \emph{colors}. $G$ is said to
 be \emph{$(L,H)$-colorable} if it admits an $(L,H)$-coloring.
	\end{defn}
	
	Note that if $(L, H)$ is a cover of $G$ and $I$ is an $(L, H)$-coloring, then $|I \cap L(v)| = 1$ for all $v \in V(G)$.
Fig.~\ref{C4} shows an example of two distinct covers of $G \cong C_4$.

\begin{figure}[ht]
\begin{center}
 \includegraphics[width=\textwidth]{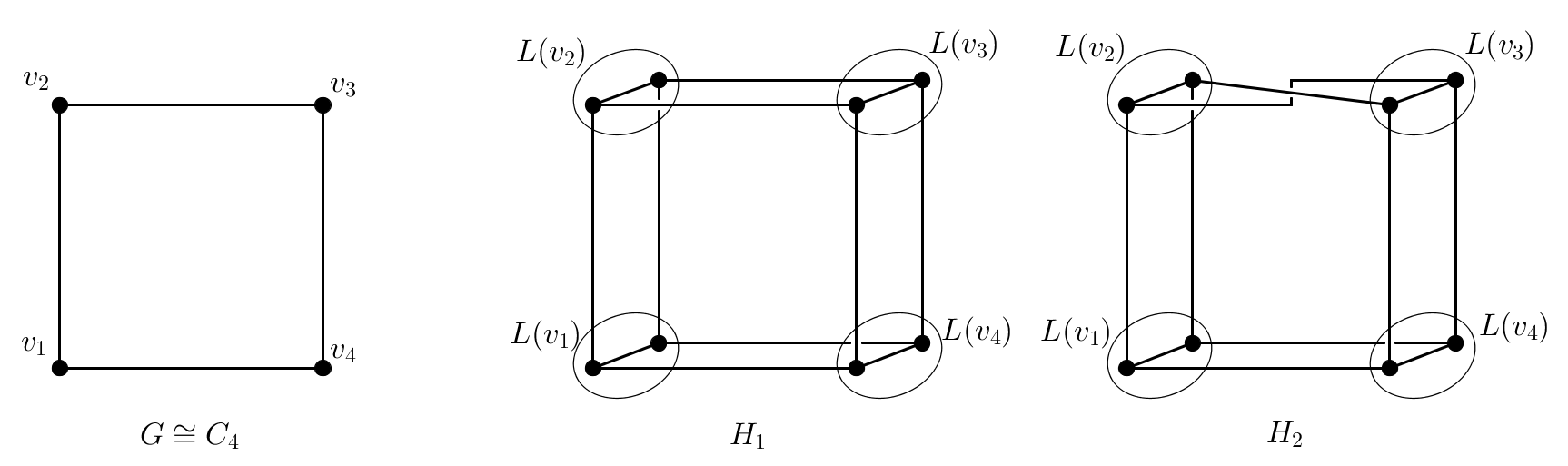}
\caption{\footnotesize Graph $C_4$ and two covers of it such that $C_4$ is $(L,H_1)$-colorable but not $(L, H_2)$-colorable.}\label{C4}
\end{center}
\end{figure}

	\begin{defn}\label{defn:fcolorable}
		Let $G$ be a graph and let $f \colon V(G) \to \mathbb{Z}_{\geq 0}$ be an assignment of nonnegative integers to the vertices of $G$.
 $G$ is \emph{DP-$f$-colorable} if it is $(L, H)$-colorable whenever $(L,H)$ is a cover of $G$ and $|L(v)| \geq f(v)$ for all $v \in V(G)$.
 If $G$ is DP-$\deg_G$-colorable, then $G$ is said to be \emph{DP-degree-colorable}.
	\end{defn}
	
\begin{defn}\label{defn:DPchr}
The {\em DP-chromatic number}, $\chi_{DP}(G)$, is the minimum $k$ such that $G$ is $(L,H)$-colorable for each choice of $(L,H)$
with $|L(v)| \geq k$ for all $v \in V(G)$.
	\end{defn}	

Dvo\v r\' ak and Postle observed that $\chi_{DP}(G)\leq k+1$ for every $k$-degenerate graph $G$ (we choose a color for each vertex $v$ greedily
from $L(v)$ avoiding the colors adjacent in $H$ to the colors already chosen for neighbors of $v$) and that Brooks' theorem almost holds for DP-colorings,
with the exception that $\chi_{DP}(C_n)=3$ for every cycle $C_n$ and not only for odd $n$, as for list coloring. The fact that $\chi_{DP}(C_4)=3$
marks an important difference between DP-coloring and list coloring since it implies that orientation theorems of Alon--Tarsi~\cite{AT} and the
Bondy--Boppana--Siegel Lemma (see~\cite{AT}) on list coloring do not extend to DP-coloring. Dvo\v r\' ak and Postle also mention that the proof
of Thomassen's theorem on list-$5$-colorability of planar graphs extends to DP-coloring. The first author of this note showed~\cite{Ber} that
the lower bound on DP-chromatic number of a graph $G$ with minimum degree $\delta$ is much stronger than Alon's bound~\cite{Al2000} for list coloring,
namely, that $\chi_{DP}(G)\geq \Omega(\delta/\ln \delta)$. On the other hand, he proved an analog of Johansson's upper bound~\cite{Jo}
on the list chromatic number of triangle-free graphs with given maximum degree.
	
The goal of this note is to naturally extend the notion of DP-coloring to multigraphs and to derive some simple properties of DP-colorings of
multigraphs. The main result is an analog of Theorem~\ref{Bor}: a characterization of connected multigraphs that are not DP-degree-colorable.
This result also yields a lower bound on the number of edges in $n$-vertex DP-critical graphs (we define such graphs in the next section).

The structure of the note is the following. In the next section we define DP-coloring of multigraphs and related notions, discuss some examples,
and state
our main result.  In Section 3 we prove the main result. In Section 4 we  briefly discuss DP-critical (multi)graphs 
and show a bound on the number of edges in them implied by the main result. For completeness, in the appendix we 
present Gallai's proof~\cite{G1} of his lemma on the number of edges in so-called Gallai trees (the original paper~\cite{G1} is in German).

\section{Definitions and the main result}

To define DP-coloring for multigraphs, we only need to change Definition~\ref{cov} as below and replace the word {\em graph} with
the word {\em multigraph} in Definitions~\ref{defn:coloring}--\ref{defn:DPchr}. The new version of Definition~\ref{cov} is:

	\begin{defn}\label{cov1}
		Let $G$ be a multigraph. A \emph{cover} of $G$ is a pair $(L, H)$, where $L$ is an assignment of pairwise disjoint sets to the vertices of $G$ and $H$ is a graph with vertex set $\bigcup_{v \in V(G)} L(v)$, satisfying the following  conditions.
		\begin{enumerate}
			\item For each $v\in V(G)$, $H[L(v)]$ is a complete graph.
			\item For each distinct $u$, $v\in V(G)$, the set of edges between $L(u)$ and $L(v)$ is the union of $e_G(u, v)$ (possibly empty) matchings.
		\end{enumerate}
	\end{defn}
		
	
	For a positive integer $k$ and a multigraph $G$, let $G^k$ denote the multigraph obtained from $G$ by replacing each edge in $G$ with a set of $k$ parallel edges.
In particular, $G^1=G$ for every $G$. The next two lemmas demonstrate two classes of multigraphs that are not DP-degree-colorable; 
the first of them exhibits multigraphs whose DP-chromatic number exceeds the number of vertices. In particular, for each $k\geq 2$, the $2$-vertex multigraph $K_2^k$
 has  DP-chromatic number $k+1$.	

	\begin{lemma}\label{lemma:completeisbad}
		The multigraph $K_n^k$ is not DP-degree-colorable.
	\end{lemma}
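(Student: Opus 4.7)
The plan is to exhibit an explicit cover $(L,H)$ of $K_n^k$ with $|L(v)|=(n-1)k=\deg_{K_n^k}(v)$ for every vertex $v$ that admits no $(L,H)$-coloring. The guiding idea is to reduce the problem, after ``projecting away'' the multi-edge multiplicity $k$, to the classical fact that $K_n$ has no proper $(n-1)$-coloring; to do so, I would blow up each color class by a factor of $k$ and wire up the cover so that two colors in $L(u)$ and $L(v)$ are adjacent in $H$ precisely when they sit in the same ``layer''.

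Concretely, for each $v\in V(K_n^k)$ set $L(v)\coloneqq\{v\}\times[n-1]\times[k]$; these sets are pairwise disjoint and of size $(n-1)k$. Make each $H[L(v)]$ a clique as required. For distinct $u,v$, put into $H$ all edges of the form $(u,i,j)(v,i,j')$ with $i\in[n-1]$ and $j,j'\in[k]$. The first step is to check this is a legal cover, i.e.\ that the bipartite graph $B_{uv}$ between $L(u)$ and $L(v)$ is a union of $k$ matchings. But $B_{uv}$ splits along $i\in[n-1]$ into $n-1$ disjoint copies of $K_{k,k}$, and each $K_{k,k}$ decomposes into $k$ perfect matchings (e.g.\ via $M_a=\{(u,i,j)(v,i,j+a\bmod k):j\in[k]\}$ for $a\in\{0,\dots,k-1\}$); taking the union over $i$ gives the required $k$ matchings in $B_{uv}$.

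For the second step, suppose towards a contradiction that $I$ is an $(L,H)$-coloring and write $I\cap L(v)=\{(v,i_v,j_v)\}$ for each $v$. For any two distinct vertices $u,v$, independence of $I$ forces $(u,i_u,j_u)(v,i_v,j_v)\notin E(H)$; but by construction these two vertices are adjacent in $H$ whenever $i_u=i_v$, so necessarily $i_u\neq i_v$. Hence $v\mapsto i_v$ is a proper coloring of $K_n$ using only the $n-1$ colors of $[n-1]$, contradicting $\chi(K_n)=n$. This shows $K_n^k$ is not $(L,H)$-colorable, so $K_n^k$ is not DP-degree-colorable. The only real subtlety is the first step, which is just the observation that $K_{k,k}$ is $k$-edge-colorable; everything else is bookkeeping.
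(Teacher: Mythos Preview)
Your proof is correct and is essentially the paper's own argument: the same cover $L(v)=\{(v,i,j):i\in[n-1],\,j\in[k]\}$ with $(v_1,i_1,j_1)\sim(v_2,i_2,j_2)$ iff $v_1=v_2$ or $i_1=i_2$, and the same contradiction via the second coordinate. The only differences are cosmetic: you verify explicitly that the bipartite graph between $L(u)$ and $L(v)$ decomposes into $k$ matchings (the paper simply asserts that $(L,H)$ is a cover), and you phrase the final step as ``$\chi(K_n)=n$'' whereas the paper states the underlying pigeonhole directly.
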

	\begin{proof}
		Let $G \coloneqq K_n^k$. For each $v \in V(G)$, let
		$$
			L(v) \coloneqq \{(v, i, j)\,:\, i \in [n-1], j \in [k]\},
		$$
		and let
		$$
			(v_1,i_1,j_1)(v_2, i_2, j_2) \in E(H) \,\vcentcolon\Longleftrightarrow\, v_1 = v_2\,\mbox{ or }\, i_1 =  i_2.
		$$
		Then $(L, H)$ is a cover of $G$ and $|L(v)| = k(n-1) = \deg_G(v)$ for all $v \in V(G)$. We claim that $G$ is not $(L,H)$-colorable. 
Indeed, if $I \subseteq V(H)$ is such that $|I \cap L(v)| = 1$ for all $v \in V$, then for some distinct $(v_1, i_1, j_1)$, $(v_2, i_2, j_2) \in I$, 
we have $i_1 = i_2$. Thus, $I$ is not an independent set. 
	\end{proof}
	
	\begin{lemma}\label{lemma:cycleisbad}
		The multigraph $C_n^k$ is not DP-degree-colorable.
	\end{lemma}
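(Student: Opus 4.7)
My plan is to exhibit, for every $n \geq 3$ and $k \geq 1$, a cover $(L,H)$ of $G \coloneqq C_n^k$ with $|L(v)| = 2k = \deg_G(v)$ for all $v$ and no $(L,H)$-coloring. Write $v_1, \ldots, v_n$ for the vertices of $C_n$ in cyclic order, and for each $i$ split $L(v_i) = A_i \sqcup B_i$ into two halves of size $k$. Between $L(v_i)$ and $L(v_{i+1})$ (indices mod $n$) the edge set of $H$ will be specified in one of two modes: a \emph{parallel} transition, in which the edges are exactly those of $K_{A_i, A_{i+1}} \cup K_{B_i, B_{i+1}}$; or a \emph{twisted} transition, in which the edges are exactly those of $K_{A_i, B_{i+1}} \cup K_{B_i, A_{i+1}}$. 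In either case the bipartite graph on $L(v_i) \cup L(v_{i+1})$ is $k$-regular, so by K\"onig's edge-coloring theorem it decomposes into $k$ perfect matchings, exactly as required by Definition~\ref{cov1}.

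Given such a cover, suppose $I$ were an $(L,H)$-coloring and record, for each $i$, a bit $\sigma_i \in \{0,1\}$ according to whether the unique color in $I \cap L(v_i)$ lies in $A_i$ or in $B_i$. A parallel transition forces $\sigma_{i+1} \neq \sigma_i$, since every vertex of $A_i$ is adjacent in $H$ to every vertex of $A_{i+1}$ (and likewise for $B$); a twisted transition forces $\sigma_{i+1} = \sigma_i$. Encoding ``parallel'' as $\epsilon_i = 1$ and ``twisted'' as $\epsilon_i = 0$, the constraints become $\sigma_{i+1} - \sigma_i \equiv \epsilon_i \pmod{2}$, and summing cyclically gives the necessary condition $\sum_i \epsilon_i \equiv 0 \pmod{2}$. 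Conversely, if this parity holds then consistent $\sigma_i$ exist and the remaining coordinate inside each half is entirely free, producing an $(L,H)$-coloring.

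Hence, to render $G$ not $(L,H)$-colorable, it suffices to arrange for an odd number of parallel transitions: I will take all $n$ transitions parallel when $n$ is odd, and $n-1$ parallel together with one twisted when $n$ is even. The only mildly technical point is the matching decomposition of each $k$-regular bipartite piece into $k$ matchings, which follows from K\"onig's theorem (or, explicitly, from a cyclic-shift decomposition of $K_{k,k}$ into $k$ perfect matchings, applied independently to the $A$-to-$A'$ and $B$-to-$B'$ halves and then paired up).
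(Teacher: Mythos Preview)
Your proof is correct and is essentially the same construction as the paper's: the paper uses coordinates $(v,i,j)$ with $i\in[2]$, $j\in[k]$, places edges between $L(v)$ and $L(v+1)$ whenever the $i$-coordinates agree (your ``parallel'' transition), and twists the wrap-around edge according to the parity of $n$---exactly your choice of all parallel transitions for odd $n$ and one twist for even $n$. The only cosmetic difference is that you phrase the obstruction as a cyclic parity sum and invoke K\"onig to certify the matching decomposition, whereas the paper traces the $i$-coordinate directly and leaves the decomposition implicit.
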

	\begin{proof}
		Let $G \coloneqq C_n^k$. Without loss of generality, assume that $V(G) = [n]$ and $e_G(u, v) = k$ if and only 
if $|u-v| = 1$ or $\{u,v\} = \{1,n\}$. For each $v \in [n]$, let
		$$
			L(v) \coloneqq \{(v, i, j)\,:\, i \in [2], j \in [k]\},
		$$
		and let
		$$
			(v_1, i_1, j_1)(v_2, i_2, j_2) \in E(H) \,\vcentcolon\Longleftrightarrow\, \left\{\begin{array}{l}
			v_1 = v_2; \text{ or} \\
			|v_1 - v_2|=1 \text{ and } i_1 = i_2; \text{ or}\\
			\{v_1, v_2\} = \{1, n\}\text{ and } i_1 = i_2 + 1 + n \pmod 2.
			\end{array}\right.
		$$
		Then $(L,H)$ is a cover of $G$ and $|L(v)| = 2k = \deg_G(v)$ for all $v \in [n]$. We claim that $G$ is not $(L, H)$-colorable. 
Indeed, suppose that $I \subset V(H)$ is an $(L,H)$-coloring of $G$. Let $I = \{(v, i_v, j_v)\}_{v = 1}^n$. 
Without loss of generality, assume that $i_1 = 1$. Then for each $v \in [n]$, $i_v = v \pmod 2$. 
Thus, $i_1 = i_n + 1 + n \pmod 2$, so $(1, i_1, j_1)(n, i_n,j_n) \in E(H)$; therefore, $I$ is not independent.
	\end{proof}
	
Our main result shows that the above lemmas describe all $2$-connected multigraphs that are not DP-degree-colorable.
	
	\begin{theo}\label{theo:main}
		Suppose that $G$ is a connected 
		multigraph. Then $G$ is not DP-degree-colorable if and only if each block of $G$ is  one of the graphs $K_n^k$, $C_n^k$ for some $n$ and $k$.
	\end{theo}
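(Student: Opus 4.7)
My plan is to prove both directions separately. The converse direction extends Lemmas~\ref{lemma:completeisbad} and~\ref{lemma:cycleisbad} by gluing their bad covers along the block tree of $G$. Concretely, at every cut vertex $w$ I would set $L(w) \coloneqq \bigsqcup_{B\ni w} L_B(w)$, with $L_B$ the bad cover supplied by the appropriate lemma for each block $B$ containing $w$, so that $|L(w)| = \sum_{B\ni w} \deg_B(w) = \deg_G(w)$; the edges of $H$ inside each block are those coming from $H_B$, and no edges run between colors of different blocks. Any hypothetical $(L,H)$-coloring $I$ then picks a color at $w$ inside exactly one $L_B(w)$, and its restriction to $V(B)$ yields an $(L_B,H_B)$-coloring of that $B$, contradicting the bad-cover property. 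A straightforward induction on the number of blocks packages this into a bad cover of all of $G$.

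For the forward direction I would prove by induction on $|V(G)|$ the slightly strengthened statement: \emph{if $(L,H)$ is a cover of a connected multigraph $G$ with $|L(v)| \geq \deg_G(v)$ for every $v$, and if $G$ either contains a block different from all $K_n^k$ and $C_n^k$ (a ``good'' block) or has some vertex $v_0$ with $|L(v_0)| > \deg_G(v_0)$ (``surplus''), then $G$ is $(L,H)$-colorable.} The surplus clause is what lets the induction pass through block decompositions. When surplus is present, a greedy BFS ordering rooted at $v_0$ and colored from the leaves back to the root works: each non-root vertex has at least one still-uncolored neighbor, so the earlier neighbors kill at most $\deg_G(v) - 1$ colors of $L(v)$ (the cover condition forces each parallel edge to forbid at most one color), while the strict inequality at $v_0$ absorbs the last conflict. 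When instead $G$ has a good block $B^\ast$ and a cut vertex, the block tree has at least two end-blocks, so I would pick an end-block $B' \neq B^\ast$ with cut vertex $w$. Then $G' \coloneqq G - (V(B')\setminus\{w\})$ is smaller, still connected, still contains $B^\ast$, and enjoys surplus at $w$ because $\deg_{G'}(w) = \deg_G(w) - \deg_{B'}(w) < |L(w)|$, so induction colors $G'$; moreover the surplus at $w$ in $G'$ leaves several admissible choices of $c_w$. Fixing such a $c_w$ and restricting lists on $V(B')\setminus\{w\}$ by removing $N_H(c_w)$ gives a problem on $B'-w$ with $|L'(v)| \geq \deg_{B'-w}(v)$, which a second appeal to the inductive claim settles: either $B'-w$ has a good sub-block, or $c_w$ can be chosen among the available options to ensure that some matching out of $L(w)$ into an $L(v)$ with $v \in N_{B'}(w)$ avoids $c_w$, producing surplus at $v$.

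The main obstacle is the remaining base case in which $G$ itself is a 2-connected good block, since cut-vertex reduction is no longer available. My plan is to produce, by standard structural arguments for 2-connected graphs, a triple $u,v,w$ with $e_G(u,v), e_G(v,w) \geq 1$, $e_G(u,w) = 0$, and $G - \{u,w\}$ still connected, and then to pick $c_u \in L(u)$ and $c_w \in L(w)$ so that either some common $c_v \in L(v)$ is matched in $H$ to both of them — yielding genuine surplus at $v$ after passing to $G - \{u,w\}$ and reducing to the surplus case of the induction — or the rigidity that blocks such an alignment can itself be leveraged to create surplus somewhere else. The multigraph setting actually helps here, because $H[L(u),L(v)]$ is a union of $e_G(u,v)$ matchings rather than a single matching, giving more opportunities for the ``hits'' of $L(u)$ and $L(w)$ in $L(v)$ to overlap. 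The chief technical difficulty will be handling the sub-cases in which no such triple $u,v,w$ exists — most notably when the underlying simple graph of $G$ is $K_n$, so that no two vertices are non-adjacent and one must argue directly from the non-uniform edge multiplicities that distinguish $G$ from every $K_n^k$.
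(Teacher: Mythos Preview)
Your converse direction and your ``surplus'' clause are correct and essentially match the paper (Lemma~\ref{lemma:glueing} and Lemma~\ref{lemma:toomany}). The gaps are in the forward direction.

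In the cut-vertex step, you color $G' = G - (V(B')\setminus\{w\})$ first and then try to extend into the end-block $B'$. Your justification for this extension---that ``$c_w$ can be chosen among the available options'' to create surplus in $B'-w$---is not established: your inductive hypothesis yields only \emph{one} coloring of $G'$, not a family with freedom at $w$, and there is no counting argument showing that a surplus-creating $c_w$ must lie among whatever options you have. The paper (and the easy fix) reverses the order: color $B'-w$ first, which is legitimate because $B'-w$ is connected and every neighbor of $w$ in $B'$ already has surplus; then pass to $G'$ with $L(w)$ shrunk by $N_H(I_{B'-w})$, which still leaves $|L'(w)|\ge \deg_{G'}(w)$, and apply induction using the good block $B^\ast\subseteq G'$.

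The larger gap is the $2$-connected base case. You look for $u,v,w$ with $e_G(u,w)=0$ and $G-\{u,w\}$ connected, and you explicitly flag the case where the underlying simple graph is $K_n$ as an unresolved ``chief technical difficulty''. That case is not peripheral: it is where the multigraph content lives, and your proposal gives no argument for it. The paper's key device is Lemma~\ref{lemma:triangle}, which replaces your hypothesis $e_G(u_1,u_2)=0$ by the weaker $e_G(u_1,u_2) < e_G(u_1,w)$; this is exactly what lets the argument go through when every pair of vertices is adjacent but the multiplicities are non-uniform (Lemma~\ref{lemma:politician}). Moreover, your triple also fails to exist when the underlying simple graph is a cycle $C_n$ with $n\ge 4$, since deleting any two non-adjacent vertices disconnects a cycle; the paper handles this separately via the regularity forced by Lemma~\ref{lemma:regular} together with the direct analysis in Lemma~\ref{lemma:cycle}. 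Without analogues of these two ingredients, your $2$-connected case does not close.
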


The result has an implication for the number of edges in DP-$k$-{\em critical} graphs and multigraphs, that is, (multi)graphs
$G$ with $\chi_{DP}(G)=k$ such that every proper sub(multi)graph of $G$ has a smaller DP-chromatic number. 
It is easy to show (and follows from the above lemmas and Lemma~\ref{lemma:toomany} in the next section) that $K_n^k$ is 
DP-$(k(n-1)+1)$-critical and $C_n^k$ is 
DP-$(2k+1)$-critical. It is also easy to show (and follows from Theorem~\ref{theo:main}) that
\begin{equation}\label{ner}
2|E(G)|\geq (k-1)n\quad\mbox{for every $n$-vertex DP-$k$-critical multigraph $G$.}
\end{equation}
The examples
of $C_n^k$ show that, for each odd $k\geq 3$, there are infinitely many $2$-connected  DP-$k$-critical multigraphs $G$ with
equality in~\eqref{ner}. However, if we consider only simple graphs, then Theorem~\ref{theo:main} implies a stronger bound than~\eqref{ner}, which is an analog of Gallai's bound~\cite{G1} for ordinary coloring (see~\cite{KSW} for list coloring):

\begin{corl}\label{cor1} Let $k\geq 4$ and let $G$ be a DP-$k$-critical graph distinct from $K_k$. Then
\begin{equation}\label{ner2}
2|E(G)|\geq \left(k-1+\frac{k-3}{k^2-3}\right)n.
\end{equation}
\end{corl}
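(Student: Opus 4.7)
The plan is to transplant Gallai's classical argument into the DP-coloring setting, with Theorem~\ref{theo:main} playing the role that Theorem~\ref{Bor} plays in the list setting. A routine greedy extension first establishes $\delta(G)\geq k-1$: if $\deg_G(v)\leq k-2$, then any $(L,\widetilde H)$-coloring of $G-v$ (available by criticality whenever $|L|\geq k-1$) extends to $v$, contradicting $\chi_{DP}(G)=k$. Setting $D=\{v:\deg_G(v)=k-1\}$ and $H=V(G)\setminus D$, I separate the degree sum and use $\deg_G(u)\geq k$ on $H$ to get the baseline bound
\[
2|E(G)|\;=\;(k-1)|D|+\sum_{u\in H}\deg_G(u)\;\geq\;(k-1)|D|+k|H|\;=\;(k-1)n+|H|.
\]

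The heart of the proof is to show that each connected component $C$ of $G[D]$ is not DP-degree-colorable. Given any cover $(L,\widetilde H)$ of $G$ with $|L(v)|\geq k-1$, criticality supplies an $(L,\widetilde H)$-coloring of $G-V(C)$. For $v\in V(C)$, the matching condition between $L(v)$ and the $k-1-\deg_C(v)$ neighbors of $v$ outside $V(C)$ blocks at most $k-1-\deg_C(v)$ colors of $L(v)$, leaving at least $\deg_C(v)$ available, so the residual cover of $C$ is a degree-list cover. If $C$ were DP-degree-colorable, we could extend to an $(L,\widetilde H)$-coloring of $G$, a contradiction. Theorem~\ref{theo:main} then forces every block of $C$ to be $K_n$ or $C_n$ (using that $G$ is simple), while connectedness of $G$ together with $G\neq K_k$ excludes $C\cong K_k$.

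Next I invoke Gallai's lemma, proved in the Appendix: for every connected graph $C$ all of whose blocks are complete graphs or cycles, with $\Delta(C)\leq k-1$ and $C\neq K_k$,
\[
(k-1)|V(C)|-2|E(C)|\;\geq\;\tfrac{k-3}{k-1}\,|V(C)|,
\]
with equality approached by long chains of $K_{k-1}$'s joined by bridge edges. Summing over the components of $G[D]$, the left-hand side equals the number $e_{DH}$ of edges between $D$ and $H$, giving $e_{DH}\geq\frac{k-3}{k-1}|D|$ and hence $2|E(G)|\geq(k-1)|D|+e_{DH}\geq\frac{(k-2)(k+1)}{k-1}|D|$. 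A convex combination of this inequality and the baseline bound, with weights $\lambda=(k-2)(k+1)/(k^2-3)$ and $\mu=(k-1)/(k^2-3)$ (summing to $1$), produces the common coefficient $k-1+(k-3)/(k^2-3)$ on both $|D|$ and $|H|$, yielding~\eqref{ner2}. The principal obstacle lies in Gallai's lemma itself; once in hand, the rest is a clean double-counting plus the short convex combination above, and the admission of even cycles as blocks in the DP setting is harmless because the extremal configuration remains a chain of $K_{k-1}$'s.
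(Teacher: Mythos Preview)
Your argument is essentially the paper's: both establish that every component of the subgraph induced by degree-$(k-1)$ vertices is a GDP-tree via Theorem~\ref{theo:main}, both invoke Lemma~\ref{lem1} to get $e_{DH}\geq\frac{k-3}{k-1}|D|$, and the final counting is the same---the paper phrases it as discharging (each high-degree vertex sends $\frac{k-1}{k^2-3}$ to each neighbor), while you phrase it as a convex combination of two linear inequalities, which is just the discharging rule unpacked.

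One wording slip to fix: in the paragraph showing $C$ is not DP-degree-colorable, you write ``Given any cover $(L,\widetilde H)$ of $G$ with $|L(v)|\geq k-1$'' and then derive ``a contradiction.'' But for an arbitrary such cover there is nothing to contradict; $G$ may well be $(L,\widetilde H)$-colorable. You need to \emph{fix} a cover $(L,\widetilde H)$ with $|L(v)|=k-1$ from which $G$ is \emph{not} colorable (such a cover exists since $\chi_{DP}(G)=k$), then use criticality to color $G-V(C)$, restrict, and conclude that $C$ fails for the resulting degree-cover. The rest of the argument is fine.
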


We will prove Theorem~\ref{theo:main} in the next section and derive Corollary~\ref{cor1} in Section~4.

\section{Proof of Theorem~\ref{theo:main}}
	We proceed via a series of lemmas.
		
	\begin{lemma}\label{lemma:cycle}
		Suppose that $G$ is a regular $n$-vertex multigraph whose underlying simple graph is a cycle. 
Then $G$ is not DP-degree-colorable if and only if $G \cong C_n^k$ for some $k$.
	\end{lemma}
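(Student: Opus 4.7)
The ``if'' direction is immediate from Lemma \ref{lemma:cycleisbad}.

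For the converse, write $m_i := e_G(v_i, v_{i+1})$ for the edge multiplicities around the cycle (indices mod $n$). Regularity forces $m_{i-1} + m_i = d$ for the common degree $d$, so the $m_i$ alternate between two values $a$ and $b = d - a$. When $n$ is odd the alternation collapses, forcing $a = b$ and $G \cong C_n^a$; when $n$ is even and $a = b$ we again have $G \cong C_n^a$. The remaining case is $n$ even with $a \neq b$; assume WLOG that $a < b$, and we must show that every cover $(L, H)$ of $G$ with $|L(v_i)| = d$ admits an $(L, H)$-coloring.

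Fix such a cover, write $\phi_i(c) \subseteq L(v_{i+1})$ for the $H$-neighbors of $c \in L(v_i)$ lying in $L(v_{i+1})$ (so $|\phi_i(c)| \leq m_i$), and suppose for contradiction that $G$ admits no $(L, H)$-coloring. For each $c_1 \in L(v_1)$, let $R_i(c_1) \subseteq L(v_i)$ be the set of colors reachable at $v_i$ by some valid coloring of the path $v_1 v_2 \cdots v_i$. The greedy lower bound $|R_i(c_1)| \geq d - m_{i-1} = m_i$ together with the non-colorability assumption $R_n(c_1) \subseteq \phi_n^{-1}(c_1)$ (a set of size $\leq m_n = b$) forces $|R_n(c_1)| = m_n$. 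We then propagate backwards: once $|R_i(c_1)| = m_i$ is known, every $c \in R_{i-1}(c_1)$ must satisfy $\phi_{i-1}(c) = L(v_i) \setminus R_i(c_1)$ (otherwise the union defining $R_i$ would exceed $m_i$), so each element of $L(v_i) \setminus R_i(c_1)$ acquires at least $|R_{i-1}(c_1)|$ neighbors in $L(v_{i-1})$ along $e_{i-1}$; the max-degree constraint $\leq m_{i-1}$ then yields $|R_{i-1}(c_1)| \leq m_{i-1}$, and with the greedy bound we conclude $|R_{i-1}(c_1)| = m_{i-1}$. Iterating down to $i = 1$ and using $R_1(c_1) = \{c_1\}$ forces $a = m_1 = 1$.

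Granted $a = 1$ and $b \geq 2$, the set $R_{n-1}(c_1)$ is a singleton $\{f(c_1)\}$, and by the structural description every $c \in R_{n-2}(c_1)$ has $f(c_1)$ as its unique non-conflict along $e_{n-2}$. Consequently, distinct $c_1, c_1' \in L(v_1)$ with $f(c_1) \neq f(c_1')$ yield disjoint sets $R_{n-2}(c_1)$ and $R_{n-2}(c_1')$ (a common element would have two distinct non-conflicts), so the number $k$ of distinct values of $f$ satisfies $k b \leq d = b + 1$, whence $k = 1$ and $f$ is constant. Then $R_n(c_1) = \phi_n^{-1}(c_1)$ is a fixed set $S \subseteq L(v_n)$ of size $b$ for every $c_1 \in L(v_1)$, so every element of $S$ is adjacent in $H$ to all $d$ colors of $L(v_1)$ along $e_n$, contradicting the max-degree bound $m_n = b < d$.

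The main obstacle is the backward induction establishing $|R_i(c_1)| = m_i$ all the way around the cycle: one has to keep careful track of the $K$-type structure between the reachable sets $R_i(c_1)$ and their complements, with the max-degree constraint on each bipartite piece of $H$ serving as the lever that forces equality to propagate step by step. Once this is in place the endgame, which combines a short counting argument with a single degree inequality, is straightforward.
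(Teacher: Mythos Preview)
Your overall strategy is sound and close to the paper's, but there is one incorrect step. The claim ``Iterating down to $i=1$ and using $R_1(c_1)=\{c_1\}$ forces $a=m_1=1$'' does not follow: your backward induction, from $|R_i(c_1)|=m_i$, yields only $|R_{i-1}(c_1)|\le m_{i-1}$, which combined with the greedy bound gives $|R_{i-1}(c_1)|=m_{i-1}$ for $i-1\ge 2$. At $i-1=1$ the inequality becomes $1=|R_1(c_1)|\le m_1$, which is vacuous and tells you nothing about $a$.

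Fortunately this false intermediate claim is also unnecessary. Your endgame works verbatim for arbitrary $a<b$: replace the singleton $\{f(c_1)\}$ by the $a$-element set $R_{n-1}(c_1)$. If $R_{n-1}(c_1)\neq R_{n-1}(c_1')$ then $R_{n-2}(c_1)\cap R_{n-2}(c_1')=\emptyset$, since a common $c$ would give $R_{n-1}(c_1)=L(v_{n-1})\setminus\phi_{n-2}(c)=R_{n-1}(c_1')$. With $|R_{n-2}(c_1)|=b$ for each $c_1$, the number $k$ of distinct sets $R_{n-1}(c_1)$ satisfies $kb\le d=a+b<2b$, so $k=1$. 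Then $R_n(c_1)$ is the same $b$-element set $T$ for every $c_1$, and $T\subseteq\phi_n^{-1}(c_1)$ for all $c_1$ forces each $t\in T$ to have $d>b=m_n$ neighbours in $L(v_1)$, the desired contradiction. So delete the ``$a=1$'' sentence and run the last paragraph with the set $R_{n-1}(c_1)$ in place of $f(c_1)$.

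For comparison, the paper's argument also introduces the forward-reachable sets (there called $A_x(v)$) but localizes the contradiction immediately at $H[L(v_2),L(v_3)]$: if $|A_x(3)|>r$ for some $x$ the reachable sets cascade to all of $L(v)$ and one colours directly; otherwise $|A_x(3)|=r$ for every $x$, which forces $L(v_2)$ to decompose into pairwise disjoint $s$-blocks and hence $|L(v_2)|\ge 2s>r+s$. Your route instead propagates the equalities $|R_i(c_1)|=m_i$ all the way around the cycle and places the contradiction at the closing edge $e_n$; both reach the same counting obstruction, just at different places on the cycle.
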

	\begin{proof}
		Without loss of generality, assume that $V(G) = [n]$ and $e_G(u, v) > 0$ if and only if $|u-v| = 1$ or $\{u,v\} = \{1,n\}$. 
Suppose that $G \not \cong C_n^k$. Since $G$ is regular, this implies that $n$ is even and for some distinct positive $r$, $s$, $e_G(v, v+1) = r$ 
for all odd $v \in [n]$ and $e_G(1, n) = e_G(v, v+1) = s$ for all even $v \in [n-1]$. Without loss of generality, assume $s > r$.
		
		Let $(L, H)$ be a cover of $G$ such that $|L(v)| = \deg_G(v) = r+s$ for all $v \in [n]$.
 We will show that $G$ is $(L, H)$-colorable. For $x \in L(1)$, say that a color $y \in L(v)$ is \emph{$x$-admissible} 
if there exists a set $I \subseteq V(H)$ that is independent in $H - E_H(L(1), L(n))$ such that $|I \cap L(u)| = 1$ for all $u \in [v]$ 
and $\{x, y\} \subseteq I$. Let $A_x(v)\subseteq L(v)$ denote the set of all $x$-admissible colors in $L(v)$. 
Clearly, for each $x \in L(1)$, $|A_x(2)| \geq s$ and $|A_x(3)| \geq r$. Suppose that for some $x \in L(1)$, $|A_x(3)| > r$. 
Since each color in $L(4)$ has at most $r$ neighbors in $L(3)$, $A_x(4) = L(4)$. 
Similarly, $A_x(v) = L(v)$ for all $v \geq 4$. In particular, $A_x(n) = L(n)$. Take any $y \in L(n) \setminus N_H(x)$. Since $y \in A_x(n)$, there exists a 
set $I \subseteq V(H)$ that is independent in $H - E_H(L(1), L(n))$ such that $|I \cap L(u)| = 1$ for all $u \in [n]$ and $\{x, y\} \subseteq I$. 
But then $I$ is independent in $H$, so $I$ is an $(L, H)$-coloring of $G$. Thus, we may assume that $|A_x(3)| = r$ for all $x \in L(v)$. Note that
		$$
			L(3) \setminus A_x(3) = L(3) \cap \bigcap_{y \in A_x(2)} N_H(y).
		$$
		Therefore, $L(3) \cap N_H(y)$ is the same set of size $s$ for all $y \in A_x(2)$. 
Since each vertex in $L(3)$ has at most $s$ neighbors in $L(2)$, the graph $H[A_x(2)\cup (L(3) \setminus A_x(3))]$ is a complete $2s$-vertex graph. Since every vertex in $L(2)$ is $x$-admissible for some $x \in L(1)$, 
$H[L(2)\cup L(3)]$ contains a disjoint union of at least two complete $2s$-vertex graphs. Therefore, $|L(2)\cup L(3)| \geq 4s$. But $|L(2)| = |L(3)|=r + s < 2s$; a contradiction.
	\end{proof}
	
	\begin{lemma}\label{lemma:toomany}
		Let $G$ be a connected multigraph and suppose $(L, H)$ is a cover of $G$ such that $|L(v)| \geq \deg_G(v)$ 
for all $v \in V(G)$, and  $|L(v_0)| > \deg_G(v_0)$ for some $v_0 \in V(G)$. Then $G$ is $(L, H)$-colorable.
	\end{lemma}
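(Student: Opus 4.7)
The plan is to exploit the slack at $v_0$ through a greedy coloring along a connectivity-derived ordering. First I would take a BFS tree of $G$ rooted at $v_0$ and enumerate the vertices as $v_1, v_2, \ldots, v_n$ in order of non-increasing distance from $v_0$, so that $v_n = v_0$ and every $v_i$ with $i < n$ has its BFS-parent $p(v_i)$ among $\{v_{i+1}, \ldots, v_n\}$. Such an ordering exists by the connectedness of $G$.

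Next I would construct an $(L, H)$-coloring greedily in this order, picking for each $v_i$ some $c_i \in L(v_i)$ that is non-adjacent in $H$ to every earlier choice $c_j$, $j < i$. The key quantitative input is Definition~\ref{cov1}(2): the edges of $H$ between $L(v_j)$ and $L(v_i)$ form the union of $e_G(v_j, v_i)$ matchings, so any single color in $L(v_j)$ has at most $e_G(v_j, v_i)$ neighbors inside $L(v_i)$ and therefore forbids at most that many colors for $v_i$.

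For $i < n$, since $p(v_i)$ is still uncolored when $v_i$ is processed, the total number of colors forbidden in $L(v_i)$ by earlier choices is at most $\sum_{j < i} e_G(v_j, v_i) \leq \deg_G(v_i) - e_G(v_i, p(v_i)) \leq \deg_G(v_i) - 1 < |L(v_i)|$, so an admissible color $c_i$ always exists. For $i = n$, all neighbors of $v_0$ have been colored and forbid at most $\deg_G(v_0)$ colors, which is strictly less than $|L(v_0)|$ by hypothesis, so again a color $c_n$ is available. The resulting set $\{c_1, \ldots, c_n\}$ meets each $L(v)$ in exactly one vertex and is independent in $H$, so it is an $(L, H)$-coloring in the sense of Definition~\ref{defn:coloring}.

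The argument has no deep obstacle; the only subtlety worth flagging is that the count of forbidden colors must be expressed in terms of the edge multiplicities $e_G(v_j, v_i)$, not merely the number of distinct neighbors, so the ``saving'' contributed by the uncolored parent is $e_G(v_i, p(v_i)) \geq 1$ rather than a bare ``$-1$'' from a single edge. This is the reason the statement transfers verbatim from simple graphs to multigraphs, provided one is careful with this bookkeeping.
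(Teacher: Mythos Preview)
Your argument is correct and is essentially the same greedy/degeneracy idea as the paper's proof: both exploit that every vertex other than $v_0$ has a neighbor processed later, so fewer than $\deg_G(v)$ colors are forbidden when $v$ is reached, and the strict slack at $v_0$ handles the final step. The only cosmetic difference is packaging---the paper phrases it as induction on $|V(G)|$ (delete $v_0$, note each component of $G-v_0$ inherits slack at some neighbor of $v_0$, recurse, then extend to $v_0$), whereas you unroll this into an explicit BFS ordering with $v_0$ last.
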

	\begin{proof}
		If $|V(G)| = 1$, the statement is clear. Now suppose $G$ is a counterexample with the 
fewest vertices. 
Consider the multigraph $G' \coloneqq G - v_0$. For each $v \in V(G')$, let $L'(v) \coloneqq L(v)$, and let $H' \coloneqq H - L(v_0)$. By construction, $(L', H')$ is a cover of $G'$ such that for all $v \in V(G')$, $|L(v)| \geq \deg_{G'}(v)$. Moreover, since $G$ is connected,
 each connected component of $G'$ contains a vertex $u$ adjacent in $G$ to $v_0$ and thus satisfying  $\deg_{G'}(u) < \deg_G(u)$. 
Hence, by the minimality assumption, $G'$ is $(L', H')$-colorable. Let $I' \subseteq V(H')$ be an $(L', H')$-coloring of $G'$. 
Then $|N_G(I') \cap L(v_0)| \leq \deg_G(v_0)$, so $L(v_0) \setminus N_G(I') \neq \emptyset$. 
Thus, $I'$ can be extended to an $(L, H)$-coloring $I$ of $G$; a contradiction.
	\end{proof}
	
	\begin{lemma}\label{lemma:saturated}
		Let $G$ be a connected multigraph and let $(L, H)$ be a cover of $G$. Suppose that there is a vertex $v_1 \in V(G)$ and a color 
$x_1 \in L(v_1)$ such that $G - v_1$ is connected and for some $v_2 \in V(G)\setminus \{v_1\}$, $x_1$ has fewer than $e_G(v_1, v_2)$ neighbors in $L(v_2)$. 
Then $G$ is $(L, H)$-colorable.
	\end{lemma}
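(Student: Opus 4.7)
The plan is to use $x_1$ as the color at $v_1$ and then apply Lemma~\ref{lemma:toomany} to finish the coloring of $G'\coloneqq G-v_1$. We are in the DP-degree-coloring setting, so we may assume $|L(v)|\geq \deg_G(v)$ for every $v$. First, for each $u\in V(G')$ set $L'(u)\coloneqq L(u)\setminus N_H(x_1)$, and let $H'$ be the subgraph of $H$ induced on $\bigcup_{u\in V(G')}L'(u)$. All three cover axioms (complete cliques on each $L'(u)$, matchings between $L'(u)$ and $L'(u')$ when $uu'\in E(G')$, and no edges otherwise) pass to this restriction, so $(L',H')$ is a cover of $G'$.

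Next I would check the degree bounds needed to invoke Lemma~\ref{lemma:toomany}. For every $u\in V(G')$, the matching axiom of Definition~\ref{cov1} gives $|N_H(x_1)\cap L(u)|\leq e_G(v_1,u)$, hence
\[
|L'(u)|\;\geq\;|L(u)|-e_G(v_1,u)\;\geq\;\deg_G(u)-e_G(v_1,u)\;=\;\deg_{G'}(u).
\]
At $u=v_2$ the hypothesis gives strictly fewer than $e_G(v_1,v_2)$ neighbors of $x_1$ in $L(v_2)$, so the same estimate is strict: $|L'(v_2)|>\deg_{G'}(v_2)$. Since $G'$ is connected by assumption, Lemma~\ref{lemma:toomany} supplies an $(L',H')$-coloring $I'$ of $G'$.

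Finally, $I\coloneqq I'\cup\{x_1\}$ is the desired $(L,H)$-coloring of $G$: it contains exactly one color per vertex; it is independent inside each $L(v)$ by construction of $I'$ and because $|I\cap L(v_1)|=1$; and no element of $I'$ is adjacent to $x_1$ in $H$, since $I'\subseteq\bigcup_{u\in V(G')}L'(u)$ and the sets $L'(u)$ were defined by deleting $N_H(x_1)$. There is no real obstacle here; the only delicate point is keeping the strict inequality at $v_2$, which is exactly what licenses the use of Lemma~\ref{lemma:toomany} rather than a direct greedy extension (we need room to overcome the possibility that $L'(u)$ has degree-many colors everywhere else in $G'$).
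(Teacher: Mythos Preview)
Your proof is correct and follows exactly the paper's approach: fix $x_1$ at $v_1$, restrict to $L'(u)=L(u)\setminus N_H(x_1)$ on $G'=G-v_1$, and apply Lemma~\ref{lemma:toomany} with $v_2$ as the vertex of slack. You were also right to make explicit the hypothesis $|L(v)|\geq\deg_G(v)$, which the paper's statement omits but its proof (and all later uses of the lemma) tacitly rely on.
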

	\begin{proof}
		Let $G' \coloneqq G - v_1$. For each $v \in V(G')$, let
		$$
			L'(v) \coloneqq L(v) \setminus N_H(x_1), 
		$$ 		and let		$$
			H' \coloneqq H - L(v_1) - N_H(x_1).
		$$
		Then $(L', H')$ is a cover of $G'$. Moreover, for each $v \in V(G')$,
		$$
			|L'(v)| = |L(v)| - |L(v) \cap N_H(x_1)| \geq \deg_G(v) - e_G(v, v_1) = \deg_{G'}(v),
		$$
		and
		$$
			|L'(v_2)| = |L(v_2)| - |L(v_2) \cap N_H(x_1)| > \deg_G(v_2) - e_G(v_2, v_1) = \deg_{G'}(v_2).
		$$		
		Since $G'$ is connected, Lemma~\ref{lemma:toomany} implies that $G'$ is $(L', H')$-colorable. 
But if $I' \subseteq V(H')$ is an $(L',H')$-coloring of $G'$, then $I' \cup \{x_1\}$ is an $(L,H)$-coloring of $G$, as desired.
	\end{proof}
	
	\begin{lemma}\label{lemma:regular}
		Suppose that $G$ is a $2$-connected multigraph and $(L,H)$ is a cover of $G$ with $|L(v)|\geq \deg_G(v)$ for each $v\in V(G)$. If
$G$ is not $(L,H)$-colorable, then $G$ is regular and
for each pair of adjacent vertices $v_1$, $v_2\in V(G)$, the bipartite graph $H[L(v_1), L(v_2)]$ is $e_G(v_1, v_2)$-regular.
	\end{lemma}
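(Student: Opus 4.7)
The plan is to deduce regularity (both of $G$ and of the bipartite graphs $H[L(v_1),L(v_2)]$) directly from the two preceding lemmas, exploiting the hypothesis that $G$ is $2$-connected so that the assumptions of Lemma~\ref{lemma:saturated} are cheap to verify.

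First I would normalize the list sizes. Since $G$ is $2$-connected, it is in particular connected, so Lemma~\ref{lemma:toomany} applied to $(L,H)$ forces $|L(v)| = \deg_G(v)$ for every $v \in V(G)$; otherwise $G$ would be $(L,H)$-colorable. Next, I would exploit $2$-connectedness: for every vertex $v_1 \in V(G)$ the multigraph $G - v_1$ is still connected, so Lemma~\ref{lemma:saturated} is applicable with this $v_1$ and any color $x_1 \in L(v_1)$. Combined with the non-colorability hypothesis, this yields that for every $v_1 \in V(G)$, every $x_1 \in L(v_1)$, and every neighbor $v_2$ of $v_1$ in $G$, the color $x_1$ has at least $e_G(v_1, v_2)$ neighbors in $L(v_2)$.

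On the other hand, by clause (2) of Definition~\ref{cov1} the edges between $L(v_1)$ and $L(v_2)$ form a union of $e_G(v_1, v_2)$ matchings, so $x_1$ has at most $e_G(v_1, v_2)$ neighbors in $L(v_2)$. Combining the two bounds, each $x_1 \in L(v_1)$ has exactly $e_G(v_1, v_2)$ neighbors in $L(v_2)$. Swapping the roles of $v_1$ and $v_2$ gives the same conclusion from the other side, so $H[L(v_1), L(v_2)]$ is $e_G(v_1, v_2)$-regular as required.

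Finally I would derive regularity of $G$ itself. Whenever $v_1 v_2 \in E(G)$ we have $e_G(v_1, v_2) \geq 1$, so the bipartite graph $H[L(v_1), L(v_2)]$ is a nonempty regular bipartite graph, and hence $|L(v_1)| = |L(v_2)|$ by a standard edge-counting argument. From the first step this reads $\deg_G(v_1) = \deg_G(v_2)$. Because $G$ is connected, propagating this equality along edges shows that $\deg_G$ is constant on $V(G)$, so $G$ is regular. The only mild subtlety I anticipate is making sure Lemma~\ref{lemma:saturated} really applies to every vertex $v_1$ — this is precisely where $2$-connectedness (not merely connectedness) is needed, and it is what prevents the argument from working for general DP-degree-critical multigraphs before the block decomposition is carried out.
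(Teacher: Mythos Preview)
Your proposal is correct and follows essentially the same route as the paper: invoke Lemma~\ref{lemma:saturated} (using $2$-connectedness to ensure $G-v_1$ is connected) together with the matching bound from Definition~\ref{cov1} to force $H[L(v_1),L(v_2)]$ to be $e_G(v_1,v_2)$-regular, then deduce $|L(v_1)|=|L(v_2)|$ and propagate along edges. Your explicit preliminary appeal to Lemma~\ref{lemma:toomany} to pin down $|L(v)|=\deg_G(v)$ makes transparent a step the paper leaves implicit when it passes from $|L(v_1)|=|L(v_2)|$ to $\deg_G(v_1)=\deg_G(v_2)$.
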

	\begin{proof}
Consider any two adjacent $v_1$, $v_2 \in V(G)$. By Lemma~\ref{lemma:saturated}, $H[L(v_1), L(v_2)]$ is a $e_G(v_1, v_2)$-regular bipartite graph with parts $L(v_1)$, $L(v_2)$. Therefore, $|L(v_1)| = |L(v_2)|$, so $\deg_G(v_1) = \deg_G(v_2)$, as desired. Since $G$ is connected and $v_1$, $v_2$ are arbitrary adjacent
vertices in $G$, this yields $G$ is regular.
	\end{proof}
	
	\begin{lemma}\label{lemma:triangle}
		Let $G$ be a $2$-connected multigraph. Suppose that $u_1$, $u_2$, $w \in V(G)$ are distinct vertices such that $G - u_1 - u_2$ is connected,
 $e_G(u_1, u_2) < e_G(u_1, w)$, and $e_G(u_2, w) \geq 1$. Then $G$ is DP-degree-colorable.
	\end{lemma}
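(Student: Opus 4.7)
My plan is to assume for contradiction that some cover $(L,H)$ of $G$ with $|L(v)|\ge \deg_G(v)$ admits no $(L,H)$-coloring, and then construct one. Lemma~\ref{lemma:toomany} lets me reduce to the case $|L(v)|=\deg_G(v)$ for every $v$, and Lemma~\ref{lemma:regular} (applicable because $G$ is $2$-connected) then yields that $G$ is regular and every bipartite graph $H[L(v),L(v')]$ with $vv'\in E(G)$ is $e_G(v,v')$-regular. The strategy is to choose colors $x_1\in L(u_1)$ and $x_2\in L(u_2)$ with $x_1x_2\notin E(H)$ that share a common neighbor in $L(w)$; once such $x_1,x_2$ are fixed, I delete $N_H(x_1)\cup N_H(x_2)$ and finish off the connected multigraph $G':=G-u_1-u_2$ via Lemma~\ref{lemma:toomany}.

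The crucial step is producing $x_1$ and $x_2$. I fix an arbitrary $y\in L(w)$ and set $A:=N_H(y)\cap L(u_1)$ and $B:=N_H(y)\cap L(u_2)$; by the regularity given by Lemma~\ref{lemma:regular}, $|A|=e_G(u_1,w)$ and $|B|=e_G(u_2,w)\ge 1$. Every vertex of $B\subseteq L(u_2)$ has at most $e_G(u_1,u_2)$ neighbors in $L(u_1)$ (directly from Definition~\ref{cov1}), hence
\[
|E(H[A,B])|\ \le\ |B|\cdot e_G(u_1,u_2)\ <\ |B|\cdot e_G(u_1,w)\ =\ |A|\,|B|,
\]
where the strict inequality invokes the hypothesis $e_G(u_1,u_2)<e_G(u_1,w)$. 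So some pair $(x_1,x_2)\in A\times B$ is non-adjacent in $H$, and by construction both $x_1$ and $x_2$ are adjacent to $y$.

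With $x_1,x_2$ chosen, I set $L'(v):=L(v)\setminus (N_H(x_1)\cup N_H(x_2))$ for each $v\in V(G')$ and take $H'$ to be the subgraph of $H$ induced on $\bigcup_{v\in V(G')}L'(v)$, which is clearly a cover of $G'$. Using $|L(v)\cap N_H(x_i)|\le e_G(v,u_i)$ for $i\in\{1,2\}$ (from Definition~\ref{cov1}), a standard inclusion--exclusion gives $|L'(v)|\ge \deg_{G'}(v)+|L(v)\cap N_H(x_1)\cap N_H(x_2)|$ for all $v\in V(G')$; in particular $|L'(v)|\ge \deg_{G'}(v)$ throughout, with strict inequality at $v=w$ because $y\in L(w)\cap N_H(x_1)\cap N_H(x_2)$. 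Since $G'$ is connected by hypothesis, Lemma~\ref{lemma:toomany} produces an $(L',H')$-coloring $I'$, and then $I'\cup\{x_1,x_2\}$ is an $(L,H)$-coloring of $G$, contradicting our assumption.

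The only genuinely non-routine step is the edge count inside $H[A,B]$; once a non-adjacent pair with a common neighbor at $w$ is secured, everything else is mechanical bookkeeping. The three hypotheses of the lemma are each used exactly once: $e_G(u_1,u_2)<e_G(u_1,w)$ drives the strict inequality in the edge count, $e_G(u_2,w)\ge 1$ guarantees $B\neq\emptyset$ so that some $x_2$ exists, and the connectedness of $G-u_1-u_2$ is precisely what Lemma~\ref{lemma:toomany} demands in order to close the argument.
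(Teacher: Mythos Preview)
Your proof is correct and follows essentially the same approach as the paper's: locate nonadjacent $x_1\in L(u_1)$, $x_2\in L(u_2)$ sharing a common neighbor in $L(w)$, then invoke Lemma~\ref{lemma:toomany} on $G-u_1-u_2$ with the reduced lists $L'(v)=L(v)\setminus(N_H(x_1)\cup N_H(x_2))$. The only cosmetic difference is in how the pair is found---you fix $y\in L(w)$ first and run an edge count on $H[A,B]$, while the paper fixes $x_2$ first, picks $y\in N_H(x_2)\cap L(w)$, and observes directly that $|L(u_1)\cap N_H(y)|=e_G(u_1,w)>e_G(u_1,u_2)=|L(u_1)\cap N_H(x_2)|$.
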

	\begin{proof}
	Suppose $G$ is not $(L,H)$-colorable for some cover $(L, H)$ with $|L(v)| = \deg_G(v)$ for all $v \in V(G)$. 
First we show that
\begin{equation}\label{0223}
 \mbox{there are nonadjacent $x_1 \in L(u_1)$, $x_2 \in L(u_2)$ with $N_H(x_1) \cap N_H(x_2) \cap L(w) \neq \emptyset$.}
\end{equation}
Indeed, consider any $x_2 \in L(u_2)$. By Lemma~\ref{lemma:regular}, $|L(w) \cap N_H(x_2)| = e_G(u_2, w) \geq 1$. 
Similarly, for each $y \in L(w) \cap N_H(x_2)$, $|L(u_1) \cap N_H(y)| = e_G(u_1, w) > e_G(u_1, u_2) = |L(u_1) \cap N_H(x_2)|$.
 Thus, there exists $x_1 \in (L(u_1) \cap N_H(y))\setminus (L(u_1) \cap N_H(x_2))$. By the choice, $x_1$ and $x_2$ are nonadjacent and
 $y \in N_H(x_1) \cap N_H(x_2) \cap L(w)$. This proves~\eqref{0223}.

Let $x_1$ and $x_2$ satisfy~\eqref{0223}. Let $G' \coloneqq G - u_1-u_2$. For each $v \in V(G')$, let
		$$
			L'(v) \coloneqq L(v) \setminus (N_H(x_1)\cup N_H(x_2)),
		$$
		and let
		$$
			H' \coloneqq H - L(u_1)-L(u_2) - N_H(x_1)-N_H(x_2).
		$$
		Then $G'$ is connected and $(L', H')$ is a cover of $G'$ satisfying the conditions of Lemma~\ref{lemma:toomany} with $w$ in the role
of $v_0$. Thus $G'$ is $(L', H')$-colorable, and hence $G$ is $(L, H)$-colorable, a contradiction.
	\end{proof}
	
	\begin{lemma}\label{lemma:politician}
		Suppose that $G$ is an $n$-vertex $2$-connected multigraph that contains a vertex adjacent to all other vertices. 
Then either $G \cong K_n^k$ for some $k$, or $G$ is DP-degree-colorable.
	\end{lemma}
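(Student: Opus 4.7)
The plan is to argue by contradiction: assume $G$ is not DP-degree-colorable yet $G \not\cong K_n^k$ for every $k$, and derive a contradiction. By Definition~\ref{defn:fcolorable} together with Lemma~\ref{lemma:toomany}, there is a cover $(L,H)$ with $|L(v)| = \deg_G(v)$ admitting no $(L,H)$-coloring, so Lemma~\ref{lemma:regular} forces $G$ to be regular. The case $n = 2$ is immediate since every $2$-vertex multigraph is $K_2^k$, so I assume $n \geq 3$ and let $w$ denote a vertex adjacent to all others.

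The key idea is to apply Lemma~\ref{lemma:triangle} to every pair of non-universal vertices, using $w$ itself as the auxiliary third vertex. For distinct $x, y \in V(G) \setminus \{w\}$, the multigraph $G - x - y$ is connected because $w$ remains adjacent to every surviving vertex, and $e_G(y, w) \geq 1$ since $w$ is universal. Since $G$ is not DP-degree-colorable, applying Lemma~\ref{lemma:triangle} with $u_1 = x$, $u_2 = y$, $w' = w$ rules out $e_G(x, y) < e_G(x, w)$; thus $e_G(x, y) \geq e_G(x, w)$, and swapping the roles of $x$ and $y$ gives $e_G(x, y) \geq e_G(y, w)$.

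Writing $a_z := e_G(w, z)$ for $z \neq w$, the regularity of $G$ applied at $w$ and at an arbitrary $x \neq w$ yields
\[
\sum_{z \in V(G) \setminus \{w\}} a_z \;=\; a_x + \sum_{y \in V(G) \setminus \{w, x\}} e_G(x, y),
\]
that is, $\sum_{y \neq w, x} a_y = \sum_{y \neq w, x} e_G(x, y)$. Together with the termwise bounds $e_G(x, y) \geq a_y$ obtained above, this forces $e_G(x, y) = a_y$ for every $y \in V(G) \setminus \{w, x\}$. The companion inequality $e_G(x, y) \geq a_x$ then yields $a_y \geq a_x$ for every such pair, whence (by symmetry in $x$ and $y$) all the $a_z$ coincide with a common value $k$, and consequently every edge of $G$ has multiplicity $k$. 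Hence $G \cong K_n^k$, contradicting our choice of $G$.

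I expect the chief subtlety to lie in the connectivity hypothesis of Lemma~\ref{lemma:triangle}: deleting two non-universal vertices is safe because $w$ keeps what remains connected, but I cannot apply the lemma with $w$ itself playing the role of $u_1$ or $u_2$, since $G - w - y$ need not be connected even when $G$ is $2$-connected. This is why the argument invokes Lemma~\ref{lemma:triangle} only in the "non-universal pair" configuration and then leverages regularity, rather than a second direct invocation, to equalize the remaining multiplicities.
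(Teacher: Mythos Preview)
Your proof is correct, but it takes a slightly more roundabout path than the paper's. The paper also begins by applying Lemma~\ref{lemma:triangle} with $w$ as the third vertex to any nonadjacent pair $u_1,u_2\in V(G)\setminus\{w\}$, concluding that the underlying simple graph is $K_n$. But then, rather than invoking regularity, the paper simply applies Lemma~\ref{lemma:triangle} a \emph{second} time to an arbitrary triple $u_1,u_2,u_3$ with $e_G(u_1,u_2)<e_G(u_1,u_3)$: once the underlying graph is complete, $G-u_1-u_2$ is automatically connected for any choice of $u_1,u_2$, so the connectivity obstacle you worried about in your last paragraph disappears. This immediately forces all multiplicities to coincide, without any degree-counting.

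Your route---one application of Lemma~\ref{lemma:triangle} (always with $w$ as the auxiliary vertex) followed by the regularity from Lemma~\ref{lemma:regular} and a short counting argument---is perfectly valid. In fact your inequalities $e_G(x,y)\ge a_x\ge 1$ already yield that the underlying graph is complete, at which point you could have switched to the paper's second invocation instead of the regularity computation. The paper's version is a little shorter and avoids Lemma~\ref{lemma:regular} entirely; your version has the minor virtue of only ever deleting vertices from $V(G)\setminus\{w\}$, so the connectivity check is uniform throughout.
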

	\begin{proof}
		Suppose that $G$ is an $n$-vertex multigraph that is not DP-degree-colorable and assume that $w \in V(G)$ is adjacent to all other vertices.
 If some distinct $u_1$, $u_2 \in V(G)\setminus \{w\}$ are nonadjacent, then the triple $u_1$, $u_2$, $w$ satisfies the conditions of 
Lemma~\ref{lemma:triangle}, so $G$ is DP-degree-colorable. Hence any two vertices in $G$ are adjacent; in other words, 
the underlying simple graph of $G$ is  $K_n$. It remains to show that any two vertices in $G$ are connected by the same number of edges.
 Indeed, if $u_1$, $u_2$, $u_3 \in V(G)$ are such that $e_G(u_1, u_2) < e_G(u_1, u_3)$, then, by Lemma~\ref{lemma:triangle} again, $G$ is DP-degree-colorable.
	\end{proof}
	
	\begin{lemma}\label{lemma:degree2}
		Suppose that $G$ is a $2$-connected $n$-vertex multigraph in which each vertex has at most $2$ neighbors. 
Then either $G \cong C_n^k$  for some $k$, or $G$ is DP-degree-colorable.
	\end{lemma}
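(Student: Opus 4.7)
The plan is to reduce this statement to the combination of Lemma~\ref{lemma:regular} and Lemma~\ref{lemma:cycle}, since together they essentially handle the regular case on a cycle. The hypothesis that each vertex has at most $2$ neighbors means the underlying simple graph of $G$ has maximum degree at most $2$, and because $G$ is $2$-connected with (say) $n \geq 3$, the underlying simple graph is $2$-regular, hence isomorphic to $C_n$. For $n = 2$ we have $G = K_2^k$ for some $k$, which under the natural convention is $C_2^k$; this small case we will just mention separately.

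So I would argue by contrapositive. Assume $G$ is not DP-degree-colorable, and fix a cover $(L, H)$ of $G$ with $|L(v)| = \deg_G(v)$ for every $v$ witnessing this. Lemma~\ref{lemma:regular} immediately yields that $G$ is regular. Combined with the observation above that the underlying simple graph is $C_n$, we see that $G$ is a regular multigraph whose underlying simple graph is a cycle. Now Lemma~\ref{lemma:cycle} directly implies that $G \cong C_n^k$ for some $k$. This is exactly the desired conclusion, so if $G \not\cong C_n^k$ then $G$ must be DP-degree-colorable.

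There is essentially no obstacle: the preceding lemmas have been engineered to decouple the regularity constraint (Lemma~\ref{lemma:regular}) from the structural classification on cycles (Lemma~\ref{lemma:cycle}), and the hypothesis on maximum degree does the remaining work of identifying the underlying simple graph. The only thing worth being careful about is the tiny case $n \leq 2$ (where "underlying graph is a cycle" needs the right convention) and noting that $2$-connectedness, together with $\Delta \leq 2$ on the underlying simple graph, really does force a $2$-regular underlying graph for $n \geq 3$; both points are essentially bookkeeping.
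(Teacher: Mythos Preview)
Your argument is correct and follows the paper's proof essentially verbatim: assume $G$ is not DP-degree-colorable, observe that the underlying simple graph is a cycle, invoke Lemma~\ref{lemma:regular} to get regularity, and then apply Lemma~\ref{lemma:cycle} to conclude $G\cong C_n^k$. The only addition you make is the remark about the borderline case $n\le 2$, which the paper simply leaves implicit.
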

	\begin{proof}
		Suppose that $G$ is a $2$-connected $n$-vertex multigraph in which each vertex has at most $2$ neighbors 
and that is not DP-degree-colorable. Then the underlying simple graph of $G$ is a cycle and Lemma~\ref{lemma:regular}
 implies that $G$ is regular, so $G \cong C_n^k$ by Lemma~\ref{lemma:cycle}.
	\end{proof}
	
	\begin{lemma}\label{lemma:2connected}
		Suppose that $G$ is a $2$-connected $n$-vertex multigraph that is not DP-degree-colorable. Then $G \cong K_n^k$ or $C_n^k$ for some $k$.
	\end{lemma}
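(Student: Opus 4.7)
My plan is to combine Lemma~\ref{lemma:regular} with a case analysis on the underlying simple graph $\tilde G$ of $G$. Since $G$ is $2$-connected and not DP-degree-colorable, Lemma~\ref{lemma:regular} gives that $G$ is regular, and $\tilde G$ is also $2$-connected. I split the argument into three cases according to the degrees in $\tilde G$.

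If every vertex has $\tilde G$-degree at most $2$, then Lemma~\ref{lemma:degree2} yields $G \cong C_n^k$. If $\tilde G$ contains a universal vertex (one adjacent in $\tilde G$ to all other vertices), then Lemma~\ref{lemma:politician} yields $G \cong K_n^k$. In either of these subcases we are done.

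The remaining case is that $\tilde G$ has a vertex of $\tilde G$-degree at least $3$ yet is not complete. Here the strategy is to exhibit distinct vertices $u_1, u_2, w \in V(G)$ with $u_1 w, u_2 w \in E(\tilde G)$, $u_1 u_2 \notin E(\tilde G)$, and $G - u_1 - u_2$ connected. For such a triple the hypothesis of Lemma~\ref{lemma:triangle} is satisfied: $e_G(u_1, u_2) = 0 < 1 \leq e_G(u_1, w)$ and $e_G(u_2, w) \geq 1$, so Lemma~\ref{lemma:triangle} forces $G$ to be DP-degree-colorable, contradicting our assumption.

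The main obstacle is producing the triple $(u_1, u_2, w)$; this is essentially the classical Brooks-type lemma used to handle $2$-connected non-complete graphs. My approach is to pick $w$ of $\tilde G$-degree at least $3$ with two nonadjacent neighbors $u_1, u_2$, relocating $w$ along a shortest path to a vertex outside $N_{\tilde G}[w]$ if $N_{\tilde G}(w)$ happens to be a clique (which is possible because $\tilde G$ is connected and not complete). To ensure $\tilde G - u_1 - u_2$ is connected, I use the $2$-connectivity of $\tilde G$ together with $\deg_{\tilde G}(w) \geq 3$: if $\tilde G - w$ is $2$-connected this is immediate, since $w$ retains a neighbor in $\tilde G - u_1 - u_2$ that attaches $w$ to the connected graph $(\tilde G - w) - u_1$; otherwise one exploits the block decomposition of $\tilde G - w$, choosing $u_1, u_2$ either from different end-blocks or from the interior of a single end-block so that removing them does not disconnect $\tilde G$.
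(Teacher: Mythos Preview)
Your overall strategy—dispose of the extreme cases via Lemmas~\ref{lemma:degree2} and~\ref{lemma:politician}, and in the remaining case produce a triple $(u_1,u_2,w)$ for Lemma~\ref{lemma:triangle}—is exactly the paper's. The difference lies in how the triple is found, and there your sketch has a real gap.

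In the subcase ``$\tilde G - w$ is $2$-connected'' you argue that $w$ retains a neighbor in $\tilde G - u_1 - u_2$, attaching it to ``the connected graph $(\tilde G - w) - u_1$''. But you need $(\tilde G - w) - u_1 - u_2$, not $(\tilde G - w) - u_1$, to be connected, and $2$-connectivity of $\tilde G - w$ does not give that. Concretely, let $\tilde G$ be the $5$-cycle $1\text{--}2\text{--}3\text{--}4\text{--}5\text{--}1$ together with a vertex $w$ adjacent to $1,2,3$. Then $w$ is the unique vertex of $\tilde G$-degree~$\ge 3$, its only nonadjacent pair of neighbors is $\{1,3\}$, $\tilde G - w = C_5$ is $2$-connected, yet $\tilde G - 1 - 3$ is disconnected. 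Your procedure outputs an invalid triple here (a valid one does exist, e.g.\ $u_1=w$, $u_2=4$, centre $3$, but your scheme does not find it).

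The standard repair in a Brooks-type argument is, when $\tilde G - w$ is $2$-connected, to take $u_1 \coloneqq w$ itself and $u_2$ a vertex at distance~$2$ from $w$, with common neighbor serving as the centre; then $\tilde G - u_1 - u_2 = (\tilde G - w) - u_2$ is connected by $2$-connectivity of $\tilde G - w$. The paper takes a different and slicker route: it first uses Lemma~\ref{lemma:triangle} \emph{contrapositively}, together with the standing hypothesis that $G$ is not DP-degree-colorable, to prove that $G - u$ is \emph{never} $2$-connected for the chosen $u$. This eliminates your problematic subcase outright, after which the end-block argument (your ``otherwise'' branch) finishes the job.
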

	\begin{proof}
		By Lemmas~\ref{lemma:politician} and \ref{lemma:degree2}, we may assume that $G$ contains a vertex $u$ 
such that $3 \leq |N_G(u)| \leq n-2$. Since $G$ is $2$-connected, $G - u$ is connected. However, $G-u$ is not $2$-connected. 
Indeed, let $v_1$ be any vertex in $V(G)\setminus (\{u\} \cup N_G(u))$ that shares a neighbor $w$ with $u$. 
Due to Lemma~\ref{lemma:triangle} with $u$ in place of $v_2$, $G - v_1 - u$ is disconnected, so $v_1$ is a cut vertex in $G - u$.
		
		Therefore, $G - u$ contains at least two leaf  blocks, say $B_1$ and $B_2$. For $i\in[2]$, let $x_i$ be the cut
vertex of $G-u$ contained in $B_i$.
Since $G$ itself is $2$-connected, $u$ has a neighbor $v_i\in B_i-x_i$ for each $i\in[2]$.
Then $v_1$ and $v_2$ are nonadjacent and $G - u - v_1 - v_2$ is connected. Since $u$ has at least $3$ neighbors, 
$G - v_1 - v_2$ is also connected. Hence, we are done by Lemma~\ref{lemma:triangle} with $u$ in the role of $w$. 
	\end{proof}
	
	\begin{lemma}\label{lemma:glueing}
		Suppose that $w\in V(G)$, $G = G_1 + G_2$, and $V(G_1)\cap V(G_2)=\{w\}$. 
 If  $G_1$ and $G_2$ are not DP-degree-colorable, then $G$ is not DP-degree-colorable.
	\end{lemma}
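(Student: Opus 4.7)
The plan is to construct a bad cover of $G$ by suitably gluing together bad covers of $G_1$ and $G_2$. First I would invoke the hypothesis to obtain, for each $i \in \{1,2\}$, a cover $(L_i, H_i)$ of $G_i$ with $|L_i(v)| = \deg_{G_i}(v)$ for every $v \in V(G_i)$ such that $G_i$ has no $(L_i, H_i)$-coloring; one may indeed shrink lists to exactly the degree without losing uncolorability, since deleting vertices of $H_i$ only restricts the set of colorings. By relabeling colors, I would also ensure that the sets $L_1(v)$ for $v \in V(G_1)$ and $L_2(u)$ for $u \in V(G_2)$ are pairwise disjoint.

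Next I would define a cover $(L, H)$ of $G$ as follows: for $v \in V(G_i) \setminus \{w\}$, set $L(v) \coloneqq L_i(v)$, and set $L(w) \coloneqq L_1(w) \cup L_2(w)$. This gives $|L(w)| = \deg_{G_1}(w) + \deg_{G_2}(w) = \deg_G(w)$ and $|L(v)| = \deg_G(v)$ for $v \neq w$. The graph $H$ is taken to contain all edges of $H_1$ and $H_2$, augmented by the complete bipartite graph between $L_1(w)$ and $L_2(w)$, so that $H[L(w)]$ is complete. I would then verify that $(L, H)$ meets Definition~\ref{cov1}: inside each $L(v)$ we have a clique; for $uv \in E(G)$ the edge $uv$ belongs to exactly one of $G_1, G_2$ (since $V(G_1)\cap V(G_2)=\{w\}$), so the edges of $H$ between $L(u)$ and $L(v)$ are precisely those of the corresponding $H_i$, forming the required union of $e_G(u,v)$ matchings; and for $u \in V(G_1)\setminus\{w\}$, $v \in V(G_2)\setminus\{w\}$ there are no edges between $L(u)$ and $L(v)$ by construction, matching $e_G(u,v)=0$.

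The key step is then to show $G$ is not $(L,H)$-colorable. If $I$ were an $(L,H)$-coloring, then $|I \cap L(w)| = 1$, so the unique color $x \in I \cap L(w)$ lies in either $L_1(w)$ or $L_2(w)$; by symmetry assume $x \in L_1(w)$. Set $I_1 \coloneqq I \cap V(H_1)$. Then $I_1$ picks exactly one color from each $L_1(v)$ with $v \in V(G_1)$, and $I_1$ is independent in $H_1$ because every edge of $H_1$ is an edge of $H$. Hence $I_1$ is an $(L_1, H_1)$-coloring of $G_1$, contradicting the choice of $(L_1, H_1)$.

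I do not anticipate any serious obstacle; the only delicate point is the bookkeeping at the shared vertex $w$, where one must simultaneously make $H[L(w)]$ a complete graph and ensure that the edges added between $L_1(w)$ and $L_2(w)$ do not spoil the restriction-to-$G_i$ argument. This succeeds precisely because exactly one color of $L(w)$ is used in any coloring, which forces the coloring onto one side of the glue and produces a coloring of the corresponding $G_i$.
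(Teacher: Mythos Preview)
Your proposal is correct and follows essentially the same construction as the paper: glue the two bad covers at $w$ by taking $L(w)=L_1(w)\cup L_2(w)$ and completing $H[L(w)]$, then observe that any $(L,H)$-coloring restricts to an $(L_i,H_i)$-coloring on whichever side contains the chosen color at $w$. The paper's argument is terser but identical in substance.
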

	\begin{proof}
 Suppose that $G_1$ is not $(L_1, H_1)$-colorable and $G_2$ is not $(L_2, G_2)$-colorable, 
where for each $i \in [2]$, $(L_i, H_i)$ is a cover of $G_i$ such that $|L(v)| = \deg_{G_i}(v)$ for all $v \in V(G_i)$. 
Without loss of generality, assume that $L_1(v_1) \cap L_2(v_2) = \emptyset$ for all $v_1 \in V(G_1)$, $v_2 \in V(G_2)$. For each $v \in V(G)$, let
		$$
			L(v) \coloneqq \begin{cases}
				L_1(v) &\text{if $v \in V(G_1) \setminus \{w\}$};\\
				L_2(v) &\text{if $v \in V(G_2) \setminus \{w\}$};\\
				L_1(w) \cup L_2(w) &\text{if $v = w$},
			\end{cases}
		$$
		and let $H \coloneqq H_1 + H_2 + K(L(w))$, where $K(L(w))$ denotes the complete graph with vertex set $L(w)$. Then $(L, H)$ is a cover of $G$ and for each $v \in V(G)$, $|L(v)| = \deg_G(v)$. 
Suppose that $G$ is $(L, H)$-colorable and let $I$ be an $(L, H)$-coloring of $G$. Without loss of generality,
 assume $I \cap L(w) \subseteq L_1(w)$. Then $I \cap V(H_1)$ is an $(L_1, H_1)$-coloring of $G_1$; a contradiction.
	\end{proof}
	
	\begin{proof}[Proof of Theorem~\ref{theo:main}]
		Lemmas~\ref{lemma:completeisbad}, \ref{lemma:cycleisbad}, and \ref{lemma:glueing} show that if each block 
of $G$ is isomorphic to one of the multigraphs $K_n^k$, $C_n^k$ for some $n$ and $k$, then $G$ is not DP-degree-colorable. 

Now assume that $G$ is a connected multigraph that is not DP-degree-colorable. If $G$ is $2$-connected, then we are done by 
Lemma~\ref{lemma:2connected}. Therefore, we may assume that $G$ has a cut vertex $w \in V(G)$. Let $G_1$ and $G_2$ be nontrivial 
connected subgraphs of $G$ such that $G = G_1 + G_2$ and $V(G_1) \cap V(G_2) = \{w\}$. It remains to show that neither $G_1$ nor $G_2$ 
is DP-degree-colorable, since then we will be done by induction. Suppose towards a contradiction that $G_1$ is DP-degree-colorable. 
Let $(L, H)$ be a cover of $G$ such that $|L(v)| =\deg_G(v)$ for all $v \in V(G)$. Due to Lemma~\ref{lemma:toomany} applied to the 
connected components of $G_2 - w$, there exists an independent set $I_2 \subseteq \bigcup_{v \in V(G_2) \setminus \{w\}} L(v)$ such that
 $|L(v) \cap I_2| = 1$ for all $v \in V(G_2) \setminus \{w\}$. For each $v \in V(G_1)$, let
		$$
			L_1(v) \coloneqq L(v) \setminus N_H(I_2).
		$$
		(Note that $L_1(v) = L(v)$ for all $v \in V(G_1) \setminus \{w\}$.) Also, let
		$$
			H_1\coloneqq H \left[\bigcup_{v \in V(G_1)} L_1(v)\right].
		$$
		Then $(L_1, H_1)$ is a cover of $G_1$. For each $v \in V(G_1)\setminus\{w\}$, $|L_1(v)| = |L(v)| = \deg_G(v) = \deg_{G_1}(v)$; 
and for $w$ we have $|L_1(w)| = |L(v)| - |N_H(I_2) \cap L(w)| \geq \deg_G(w) - \deg_{G_2}(w) = \deg_{G_1}(w)$. 
Since $G_1$ is DP-degree-colorable, it is $(L_1, H_1)$-colorable. But if $I_1$ is an $(L_1, H_1)$-coloring of $G_1$, then $I_1 \cup I_2$ is an $(L, H)$-coloring of $G$.
	\end{proof}
\section{On DP-critical graphs}	

Gallai~\cite{G1} proved bound~\eqref{ner2} for ordinary $k$-critical $n$-vertex graphs using an upper bound on the number of edges in
{\em Gallai trees}---the graphs in which every block is a complete graph or an odd cycle. We will need the same statement for
{\em GDP-trees}---the graphs in which every block is a complete graph or a cycle (not necessarily odd).

\begin{lemma}\label{lem1} Let $k\geq 4$ and let $T$ be an $n$-vertex GDP-tree  with maximum degree $\Delta(T)\leq k-1$ not
containing $K_k$. Then
\begin{equation}\label{e1}
2|E(T)|\leq \left(k-2+\frac{2}{k-1}\right)n.
\end{equation}
\end{lemma}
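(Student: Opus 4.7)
The plan is to argue by induction on $n$, mimicking Gallai's approach but with an extra case to cover even-length cycles. Let $\alpha := k-2 + 2/(k-1)$; the numerical identity that drives the argument is $\alpha(k-1) = (k-1)(k-2) + 2$.

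The base case is when $T$ is a single block, which must be either $K_r$ with $r \le k-1$ or $C_r$ with $r \ge 3$. In the first case $2|E(T)|/n = r-1 \le k-2 < \alpha$; in the second case $2|E(T)|/n = 2 \le \alpha$ (using $k \ge 4$).

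For the inductive step assume $T$ has at least two blocks, and let $B$ be a leaf block of the block-cut tree with cut vertex $w$. My first move is to pass to $T' := T - (V(B) \setminus \{w\})$, which is again a GDP-tree of maximum degree at most $k-1$ with no $K_k$; then $2|E(T)| = 2|E(T')| + 2|E(B)|$, and combined with the inductive bound $2|E(T')| \le \alpha(n - |V(B)| + 1)$, the target inequality reduces to $2|E(B)| \le \alpha(|V(B)|-1)$. When $B \cong K_r$ with $r \le k-2$ this amounts to $r \le \alpha$, which holds since $\alpha > k-2$. When $B \cong C_r$ with $r \ge 4$ it becomes $\alpha \ge 2 + 2/(r-1)$, true for all $k \ge 4$ and $r \ge 4$ (and tight at $k = r = 4$, which is exactly where the appearance of even cycles could be problematic).

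The main obstacle, and the one case where this simple peeling fails, is $B \cong K_{k-1}$, since then $2|E(B)| = (k-1)(k-2) > \alpha(k-2)$. Here I would exploit the degree bound: $\deg_B(w) = k-2$ together with $\Delta(T) \le k-1$ forces $w$ to lie in at most one other block $B'$, and since $\deg_{B'}(w) \le 1$ the block $B'$ must be isomorphic to $K_2 = \{w, w'\}$. If $w'$ is a leaf of $T$, then $T$ consists of exactly the pair $K_{k-1}$ and an attached edge, $n = k$, and one checks $\alpha n - 2|E(T)| = (k-4) + 2k/(k-1) \ge 0$ by hand. Otherwise $w'$ is a cut vertex of $T$, and the correct move is to delete \emph{all} of $V(B)$, which erases both $B$ and the bridge $B'$ and leaves a GDP-tree $T'$ on $n - (k-1)$ vertices. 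Applying induction to $T'$ and using $\alpha(k-1) = (k-1)(k-2) + 2 = 2|E(B)| + 2|E(B')|$ then yields $2|E(T)| \le \alpha n$ with exact equality in this step. This double-block peeling is the one new ingredient beyond textbook Gallai-tree induction, and it also pinpoints the extremal configurations as chains of $K_{k-1}$ blocks linked by $K_2$ bridges.
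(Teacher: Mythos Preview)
Your proof is correct and follows essentially the same approach as the paper's (Gallai's) proof: induct, peel off a leaf block $B$, and when $B$ has maximum degree $k-2$ delete all of $V(B)$ together with the forced $K_2$ bridge. The only cosmetic differences are that the paper inducts on the number of blocks rather than on $n$, splits cases by $\Delta(B)$ rather than by block type, and does not separate out your subcase ``$w'$ is a leaf'' (it is absorbed into the induction, since a single-vertex $T'$ trivially satisfies the inequality).
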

The proof is the same as Gallai's. We present the proof in the appendix, since Gallai's paper is in German.
Below is the rest of the proof of Corollary~\ref{cor1}. It is based on Gallai's ideas but is shorter.

We use discharging. Let $G$ be an $n$-vertex DP-$k$-critical graph distinct from $K_k$. Note that the minimum degree of $G$ is at least $k-1$. 
The initial charge of each vertex $v\in V(G)$ is $\charge(v)\coloneqq\deg_G(v)$. The only discharging rule is this:

{\bf (R1)} Each vertex $v\in V(G)$ with $\deg_G(v)\geq k$ sends to each neighbor the charge $\frac{k-1}{k^2-3}$.

Denote the new charge of each vertex $v$ by $\charge^\ast(v)$. We will show that 
\begin{equation}\label{ch}
 \sum_{v\in V(G)}\charge^\ast(v)\geq \left(k-1+\frac{k-3}{k^2-3}\right)n.
\end{equation}

Indeed, if $\deg_G(v)\geq k$, then 
\begin{equation}\label{ch*}
\charge^\ast(v)\geq \deg_G(v)-\frac{k-1}{k^2-3}\cdot\deg_G(v)\geq k\left(1-\frac{k-1}{k^2-3}\right)=k-1+\frac{k-3}{k^2-3}.
\end{equation}
Also, if $T$ is a component of the subgraph $G'$ of $G$ induced by the vertices of degree $k-1$, then
$$ \sum_{v\in V(T)}\charge^\ast(v)\geq(k-1)|V(T)|+\frac{k-1}{k^2-3}\left|E_G(V(T),V(G)\setminus V(T)\right|.$$
Since $T$ is a GDP-tree and does not contain $K_k$, by Lemma~\ref{lem1},
$$\left|E(V(T),V(G)\setminus V(T)\right|\geq (k-1)|V(T)| - \left(k-2+\frac{2}{k-1}\right)|V(T)| = \frac{k-3}{k-1}|V(T)|.$$ Thus for every component $T$ of $G'$ we have
$$\sum_{v\in V(T)}\charge^\ast(v)\geq (k-1)|V(T)|+\frac{k-1}{k^2-3}\cdot\frac{k-3}{k-1}\cdot|V(T)|=\left(k-1+\frac{k-3}{k^2-3}\right)|V(T)|.$$
Together with~\eqref{ch*}, this implies~\eqref{ch}.

\section*{Appendix}
We repeat Gallai's proof of Lemma~\ref{lem1} by induction on the number of blocks. If $T$ is a block, then, since $T\not\cong K_k$ and $k\geq 4$,
$\Delta(T)\leq k-2$ which is stronger than~\eqref{e1}.

Suppose~\eqref{e1} holds for all GDP-trees with at most $s$ blocks and $T$ is a GDP-tree with $s+1$ blocks.
Let $B$ be a leaf block in $T$ and $x$ be the cut vertex in $V(B)$. Let $D\coloneqq\Delta(B)$.

{\bf Case 1:}  $D\leq k-3$. Let $T'\coloneqq T-(V(B)\setminus \{x\})$. Then $T'$ is a GDP-tree with $s$ blocks.
So $2|E(T)|=2|E(T')|+D|V(B)|$ and, by induction, $2|E(T')|\leq \left(k-2+\frac{2}{k-1}\right)(n-|V(B)|+1)$. If $B=K_r$, then $r=D+1\leq k-2$. So in this case
\begin{align*}
	&2|E(T)|-\left(k-2+\frac{2}{k-1}\right)n\\
	\leq\,&\left(k-2+\frac{2}{k-1}\right)(n-D)+D(D+1)-\left(k-2+\frac{2}{k-1}\right)n\\
	=\,&D\left(-k+2-\frac{2}{k-1} + D + 1\right)\leq -D\frac{2}{k-1}<0,
\end{align*}
as claimed. Similarly, if $B=C_t$, then, by the case, $k\geq 5$ and
\begin{align*}
	&2|E(T)|-\left(k-2+\frac{2}{k-1}\right)n\\
	\leq\,&\left(k-2+\frac{2}{k-1}\right)(n-t+1)+2t-n\left(k-2+\frac{2}{k-1}\right)\\
	=\,&(t-1)\left(-k+2-\frac{2}{k-1}+2\right)+2< 2\left(-k+4\right)+2\leq 0.
\end{align*}

{\bf Case 2:}  $D= k-2$. Since $\Delta(T)\leq k-1$, only one block $B'$ apart from $B$ may contain $x$
and this $B'$ must be $K_2$. 
Let $T''=T-V(B)$.  Then $T''$ is a GDP-tree with $s-1$ blocks.
So $2|E(T)|=2|E(T'')|+D|V(B)|+2$ and, by induction, $2|(T'')|\leq \left(k-2+\frac{2}{k-1}\right)(n - |V(B)|)$. Hence in this case, since $|V(B)|\geq D+1=k-1$,
\begin{align*}
	&2|E(T)|-\left(k-2+\frac{2}{k-1}\right)n\\
	\leq\,&\left(k-2+\frac{2}{k-1}\right)(n-|V(B)|)+(k-2)|V(B)|+2-\left(k-2+\frac{2}{k-1}\right)n\\
	=\,&|V(B)|\left(-k+2-\frac{2}{k-1}+k-2\right)+2\leq -\frac{2}{k-1}|V(B)|+2\leq 0,
\end{align*}
again.\qed
\end{document}